\newcommand{\conv}{\mathop{\mathrm{conv}}}
\newcommand{\nnegrk}{\mathop{\mathrm{rank}_+}} 
\newcommand{\suppmat}{\mathop{\mathrm{suppmat}}} 
\newcommand{\xc}{\mathop{\mathrm{xc}}} 
\newcommand{\vct}{\bm}
\newcommand{\K}{{\mathrm{K}}}
\newcommand{\N}{{\mathrm{N}}}
\newcommand{\RR}{{\mathbb{R}}}
\newcommand{\CutP}{\mathrm{CUT}^\square}
\newcommand{\CorP}{\mathrm{COR}^\square}
\newcommand{\TSP}{{\mathrm{TSP}}}
\newcommand{\SAT}{{\mathrm{SAT}}}
\newcommand{\STAB}{{\mathrm{STAB}}}
\newcommand{\threeDM}{{\mathrm{3DM}}}
\newcommand{\SUBSETSUM}{{\mathrm{SUBSETSUM}}}
\newcommand{\Knapsack}{{\mathrm{KNAPSACK}}}
\newcommand*{\abs}[1]{\lvert#1\rvert}
\title{On the extension complexity of combinatorial polytopes }
\author{David Avis$^{1,2}$ 
\thanks{Email: \texttt{avis@cs.mcgill.ca}}
\and Hans Raj Tiwary$^3$
  \thanks{Email: \texttt{hans.raj.tiwary@ulb.ac.be}}}
\theoremstyle{plain}
\newtheorem{theorem}{Theorem}
\newtheorem{corollary}{Corollary}
\newtheorem{lemma}{Lemma}
\newtheorem{proposition}{Proposition}
\theoremstyle{definition}
\theoremstyle{remark}
\let\cite\citep
\newlength{\normalparindent}
\newlength{\normalparskip}
\begin{document}
\maketitle
\footnotetext[1]{GERAD and School of Computer Science, McGill University,
  3480 University Street, Montreal, Quebec, Canada H3A 2A7.}
\footnotetext[2]{Graduate School of Informatics,
  Kyoto University, Sakyo-ku, Yoshida Yoshida, Kyoto 606-8501, Japan}
\footnotetext[3]{Department of Mathematics, Universit\'{e} Libre de Bruxelles, Boulevard du Triomphe, B-1050 Brussels,  Belgium}

\begin{abstract}
In this paper we extend recent results of Fiorini et al. on the extension complexity
of the cut polytope and related polyhedra.
We first describe a lifting argument to show exponential extension
complexity for a number of NP-complete problems including subset-sum and three
dimensional matching.
We then obtain a relationship between the extension complexity
of the cut polytope of a graph and that of its graph minors. Using this we
are able to show exponential extension complexity for the cut polytope of
a large number of graphs,
including those used in quantum information and suspensions of cubic
planar graphs.
\end{abstract}

\section{Introduction}
\label{intro}
In formulating optimization problems as linear programs (LP), adding extra variables can greatly reduce the size of the LP \cite{ConfortiCornuejolsZambelli10}. However, it has been shown recently that for some polytopes one cannot obtain polynomial size LPs by adding extra variables \cite{FMPTW, Rothvoss11}. In a recent paper \cite{FMPTW}, Fiorini et.al. proved such results for the cut polytope, the traveling salesman polytope, and the stable set polytope for the complete graph $K_n.$ In this paper, we extend the results of Fiorini et. al. to several other interesting polytopes. We do not claim novelty of our techniques, in that they have been used - in particular - by Fiorini et. al. Our motivation arises from the fact that there is a strong indication that NP-hard problems require superpolynomial sized linear programs. We make a step in this direction by giving a simple technique that can be used to translate NP-completeness reductions into  lower bounds for a number of interesting polytopes.

\paragraph*{Cut polytope and related polytopes.}~
The cut polytope arises in many application areas and has been extensively studied. 
Formal definitions of this polytope and its relatives are given in the next section.
A comprehensive compilation of facts about the cut polytope is contained in
the book by Deza and Laurent
\cite{cutbook}. Optimization over the cut polytope is known as the max cut problem,
and was included in Karp's original list of problems that he proved to be NP-hard.
For the complete graph with $n$ nodes,
a complete list of the facets of the cut polytope
$\CutP_n$ is known for
$n\le7$~ (see Section 30.6 of \cite{cutbook}),
as well as many classes of facet producing valid inequalities.
The hypermetric inequalities (see Chapter~28 of \cite{cutbook}) 
are examples of such a class, and it is known that an exponential
number of them are facet inducing.
Less is known about classes of facets for the cut polytope of
an arbitrary graph, $\CutP(G)$. Interest in such polytopes arises
because of their application to fundamental problems in physics. 

In quantum information theory, the cut polytope arises in relation to Bell inequalities.
These inequalities, a generalization of Bell's original
inequality~\cite{Be64},
were introduced
to better understand the nonlocality of quantum physics.
Bell inequalities for two parties are inequalities valid for the
cut polytope of the complete tripartite graph $\K_{1,n,n}$.
Avis, Imai, Ito and Sasaki~\cite{AIIS}
proposed an operation named \emph{triangular elimination}, which is a
combination of zero-lifting and Fourier-Motzkin elimination (see e.g.\
\cite{Zi95}) using the triangle inequality.
They proved that triangular elimination maps facet inducing inequalities of the
cut polytope of the complete graph to facet inducing inequalities of
the cut polytope of $\K_{1,n,n}$.
Therefore a standard description of such polyhedra contains an exponential number
of facets.

In \cite{AII} the method was extended to obtain facets of $\CutP(G)$
for an arbitrary graph $G$ from facets
of $\CutP_n$. For most, but not all classes of graphs, $\CutP(G)$ has an exponential number of
facets. An interesting exception are the graphs with no $K_5$ minor. 
Results of Seymour for the cut cone, extended by Barahona
and Mahjoub to the cut polytope (see Section 27.3.2 of \cite{cutbook}), show that
the facets in this case
are just projections of triangle inequalities.
It follows that the max cut problem for a graph $G$ on $n$ vertices
with no $K_5$ minor can be solved in polynomial time
by optimizing over the {\em semi-metric polytope}, which has $O(n^3)$ facets.
Another way of expressing this is to say that in this
case $\CutP(G)$ has
$O(n^3)$ \emph{extension complexity}, a notion that will be discussed next.

\paragraph*{Extended formulations and extensions}
~
Even for polynomially solvable problems, the associated polytope 
may have an exponential number of facets. By working in a higher dimensional space
it is often possible to decrease the number of constraints. 
In some cases, a polynomial increase in dimension can yield
an exponential decrease in the number of constraints. 
The previous paragraph contained an example of this.

For NP-hard problems the notion of extended formulations also comes into play.
Even though a natural LP formulation of such a problem has exponential size,
this does not rule out a polynomial size formulation in higher dimensions.

In a groundbreaking paper, Yannakakis \cite{Ya91} proved 
that every symmetric LP for the 
Travelling Salesman Problem (TSP) has exponential size.
Here, an LP is called \emph{symmetric} if every permutation of the 
cities can be extended to a permutation of all the variables of the 
LP that preserves the constraints of the LP. 
This result refuted various claimed proofs of a polynomial time algorithm for the
TSP. In 2012 Fiorini et al. \cite{FMPTW} proved that the max cut problem also
requires exponential size if it is to be solved as an LP. Using this result,
they were able to drop the symmetric condition, 
required by Yannakakis, to get a general
super polynomial bound for LP formulations of the TSP.

\paragraph*{Our contributions and outline of the paper}~
In this paper, we provide more examples of some polytopes associated with hard combinatorial 
problems as a way to illustrate a general technique for proving lower bounds for the extension complexity of a polytope.
The rest of the paper is organized as follows. 

In the next 
section we give background on cut polytopes, a summary of the approach in \cite{Ya91} and \cite{FMPTW}, 
and discuss a general strategy 
for proving lower bounds. In Section \ref{sec:examples} we discuss four polytopes arising 
from the 3SAT, subset sum, 3-dimensional matching and the maximum stable set problems, and prove 
superpolynomial extension complexity for them. For the stable set polytope, we improve the 
result of \cite{FMPTW} by proving superpolynomial lower bounds for the stable set polytope of cubic planar graphs.

In Section \ref{sec:ext_cut} we first reprove the result of \cite{FMPTW} for 
the cut polytope directly without making use of the correlation polytope. 
We then prove how the bounds propagate when one takes the minors of a graph. 
We use our results to prove superpolynomial lower bounds for the Bell-inequality polytope $\CutP(K_{1,n,n})$ 
described above.
As already noted, the max cut problem can be solved in polynomial time for graphs that are $K_5$ minor free
and their cut polytope has a polynomial size extended formulation. Planar graphs are a subset of this class. 
A suspension of a graph is formed by adding an additional vertex and joining it to all of the graph's original vertices.
Barahona \cite{Barahona83} proved that the max cut problem is NP-hard for 
suspensions of planar graphs and hence for $K_6$ minor-free graphs. 
We show that this class of graphs has superpolynomial extension complexity. 
In fact, the graphs used in our proof are suspensions of cubic planar graphs. 

\section{Preliminaries} \label{sect:prelim}

We briefly review basic notions about the cut polytope and extension complexity
used in later
sections.
Definitions, theorems and other results for the cut polytope stated in this section are
from \cite{cutbook},
which readers are referred to for more information.
We assume that readers are familiar with basic notions in convex
polytope theory such as convex polytope, facet, projection and
Fourier-Motzkin elimination.
Readers are referred to a textbook~\cite{Zi95} for details.

Throughout this paper, we use the following notation.
For a graph $G=(V,E)$ we denote the edge between two vertices $u$ and $v$ by $uv$,
and the neighbourhood
of a vertex $v$ by $\N_G(v)$. We let $[n]$ denote the integers $\{ 1,2,...,n\}$.

\subsection{Cut polytope and its relatives}
\label{cutdef}

  The \emph{cut polytope} of a graph $G=(V,E)$, denoted $\CutP(G)$, is
  the convex hull of the cut vectors $\vct{\delta}_G(S)$ of $G$
  defined by all the subsets $S\subseteq V$ in the
  $\abs{E}$-dimensional vector space $\RR^E$.
  The cut vector $\vct{\delta}_G(S)$ of $G$ defined by $S\subseteq V$
  is a vector in $\RR^E$ whose $uv$-coordinate is defined as follows:
  \[
    \delta_{uv}(S)=\begin{cases}
      1 & \text{if $\abs{S\cap\{u,v\}}=1$,} \\
      0 & \text{otherwise,}
    \end{cases} \text{ for $uv\in E$.}
  \]
  If $G$ is the complete graph $\K_n$, we simply denote $\CutP(\K_n)$ 
  by $\CutP_n$.

For completeness, although we will not use it explicitly, we define the \emph{correlation polytope}
$\CorP_n$. For each subset $S\subseteq \{1,2,...,n\}$ we define the correlation vector
$\pi(S)$ of length $(n+1)n/2$ by setting $\pi(S)_{ij} = 1$ if and
only if $i,j \in S$, for all $1 \le i \le j \le n$. $\CorP_n$ is the convex hull of the $2^n$ correlation vectors $\pi(S)$.
A linear map, known as the covariance map, shows the one-to-one correspondence of
$\CorP_n$ and $\CutP_{n+1}$ (see \cite{cutbook}, Ch. 5).

For a subset $F$ of a set $E$, the \emph{incidence vector}
of $F$ (in $E$)%
\footnote{The set $E$ is sometimes not specified explicitly
  when $E$ is clear from the context or the choice of $E$ does not
  make any difference.}
is the vector $\vct{x}\in\{0,1\}^E$ defined by
$x_e=1$ for $e\in F$ and $x_e=0$ for $e\in E\setminus F$.
Using this term, the definition of the cut vector can also be stated
as follows: $\vct{\delta}_G(S)$ is the incidence vector of the cut set
$\{uv\in E\mid\abs{S\cap\{u,v\}}=1\}$ in $E$.
When $G=K_n$ we simply denote the cut-vectors by $\delta(S)$.

We now describe an important 
well known general class of valid inequalities for $\CutP_n$ (see, e.g. \cite{cutbook}, Ch. 28).

\begin{lemma} \label{lem:hyp}
For any $n \ge 2$, let $b_1, b_2, ... , b_n$ be any set of $n$ integers. 
The following inequality
is valid for $\CutP_n$:
\begin{equation} \label{eq:hyp}
\sum_{1 \leq i < j \leq n} b_i b_j x_{ij} \leq\left \lfloor 
\frac{(\sum_{i=1}^n b_i )^2}{4} \right\rfloor
\end{equation}
\end{lemma}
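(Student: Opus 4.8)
The plan is to verify the inequality directly at the vertices of $\CutP_n$, since a linear inequality holds on a polytope if and only if it holds at every vertex, and $\CutP_n$ is by definition the convex hull of the cut vectors $\vct\delta(S)$, $S\subseteq[n]$. So it suffices to show that for every $S\subseteq[n]$,
\[
\sum_{1\le i<j\le n} b_i b_j\,\delta_{ij}(S)\ \le\ \left\lfloor \frac{\bigl(\sum_{i=1}^n b_i\bigr)^2}{4}\right\rfloor .
\]

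The key observation is that $\delta_{ij}(S)=1$ precisely when exactly one of $i,j$ lies in $S$, so the left-hand side collapses to a product of partial sums:
\[
\sum_{1\le i<j\le n} b_i b_j\,\delta_{ij}(S)\ =\ \sum_{\substack{i\in S,\ j\notin S}} b_i b_j\ =\ \Bigl(\sum_{i\in S} b_i\Bigr)\Bigl(\sum_{j\notin S} b_j\Bigr).
\]
Writing $a=\sum_{i\in S}b_i$ and $B=\sum_{i=1}^n b_i$, the right factor is $B-a$, so the left-hand side equals $a(B-a)$ with $a\in\mathbb Z$.

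It then remains to bound the integer-valued quadratic $a\mapsto a(B-a)$. Over the reals its maximum is $B^2/4$ at $a=B/2$; over the integers the maximum is attained at $a=\lfloor B/2\rfloor$ (equivalently $a=\lceil B/2\rceil$), giving $\lfloor B/2\rfloor\lceil B/2\rceil$. A short case check on the parity of $B$ shows $\lfloor B/2\rfloor\lceil B/2\rceil=\lfloor B^2/4\rfloor$ in both cases, so $a(B-a)\le\lfloor B^2/4\rfloor$ for every integer $a$, which is exactly what we need. There is no real obstacle here; the only thing to be slightly careful about is keeping the floor on the right-hand side sharp, i.e.\ not losing it to a weaker bound $B^2/4$ — that is the point of the parity argument, and it is what makes \eqref{eq:hyp} valid with integer (rather than merely rational) right-hand side.
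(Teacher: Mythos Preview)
Your proof is correct and is essentially the same as the paper's: both evaluate the left-hand side at a cut vector as $\bigl(\sum_{i\in S}b_i\bigr)\bigl(\sum_{i\notin S}b_i\bigr)$ and then bound this product by $\lfloor(\sum_i b_i)^2/4\rfloor$ via a parity case distinction on $\sum_i b_i$. The only cosmetic difference is that you phrase the last step as maximizing the integer quadratic $a\mapsto a(B-a)$, whereas the paper writes out the even and odd cases directly.
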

\begin{proof}
Let $\delta(S)$ be any cut vector for the complete graph $K_n$. Then
\begin{equation} \label{eq:hyp1}
\sum_{1 \leq i < j \leq n} b_i b_j \delta(S)_{ij}  =
(\sum_{i \in S} b_i)(\sum_{i \notin S} b_i )
\end{equation}
Now observe that if the sum of the $b_i$ is even the floor sign
is redundant and an elementary calculation shows that the right hand
side of (\ref{eq:hyp1}) is bounded above by the right hand side of
(\ref{eq:hyp}). If the sum of the $b_i$
are odd then the same calculation gives an upper bound of 
$(\sum_{i=1}^n b_i+1)(\sum_{i=1}^n b_i-1)/4= (\sum_{i=1}^n b_i)^2/4 - 1/4$
on the right hand
side of (\ref{eq:hyp1}) and the lemma follows.
\end{proof}

The inequality (\ref{eq:hyp}) is called {\em hypermetric} (respectively, of {\em negative type})
if the integers $b_i$
can be partitioned into two subsets whose sum differs by one (respectively, zero).
A simple example of hypermetric inequalities are the triangle inequalities, obtained
by setting three of the $b_i$ to be +/- 1 and the others to be zero.
The most basic negative type inequality is non-negativity, obtained by setting one
$b_i$ to 1, another one to -1, and the others to zero.
We note in passing that Deza (see Section 6.1 of \cite{cutbook})
showed that each negative type inequality
could be written as a convex combination of hypermetric inequalities, so that none
of them are facet inducing for $\CutP_n$.

For any fixed $n$ there are an infinite number of hypermetric inequalities, but all but a finite number
are redundant. This non-trivial fact was proved by Deza, Grishukhin and Laurent (see \cite{cutbook} Section 14.2)
and allows us to define the {\em hypermetric polytope}, which we will refer to again later.

\subsection{Extended formulations and extensions}

In this paper we make use of the machinery developed and described in
Fiorini et al. \cite{FMPTW}. A brief summary is given here and
the reader is referred to the original paper for more details and proofs.

An \emph{extended formulation} (EF) of a polytope $P \subseteq \RR^d$ 
is a linear system

\begin{equation} \label{eq:EF}
E x + F y = g,\ y \geqslant \mathbf{0}
\end{equation}
in variables $(x,y) \in \RR^{d+r},$ where $E, F$ are real 
matrices with $d, r$ columns respectively, and $g$ is a column vector, 
such that $x \in P$ if and only if there exists $y$ such 
that \eqref{eq:EF} holds. The \emph{size} of an EF is defined as its 
number of \emph{inequalities} in the system.

An \emph{extension} of the polytope $P$ is another polytope
$Q \subseteq \mathbb{R}^e$ such that $P$ is the image of $Q$ under a linear map. 
Define the \emph{size} of an extension $Q$ as the number of facets of $Q$. Furthermore, define the \emph{extension complexity} of $P$, denoted by $\xc{( P )},$ as the minimum size of any extension of $P.$ 

For a matrix $A$, let $A_i$ denote the $i$th row of 
$A$ and $A^j$ to denote the $j$th column of $A$.
Let $P = \{x \in \RR^d \mid Ax \leqslant b\} = \conv(V)$ be a polytope, 
with $A \in \RR^{m \times d}$, $b \in \RR^m$ and 
$V = \{v_1,\ldots,v_n\} \subseteq \RR^d$. 
Then \(M \in \RR_+^{m \times n}\) defined as 
\(M_{ij} := b_i - A_i v_j\) with \(i \in [m] := \{1,\ldots,m\}\) 
and \(j \in [n] := \{1,\ldots,n\}\) is 
the \emph{slack matrix} of \(P\) w.r.t.\ $Ax \leqslant b$ and $V$. 
We call the submatrix of $M$ induced by rows 
corresponding to facets and columns corresponding to 
vertices the \emph{minimal slack matrix} of $P$ and
denote it by~$M(P)$. Note that the slack matrix
may contain columns that correspond to feasible points that are not vertices of $P$ and
rows that correspond to valid inequalities that are not facets of $P$, and therefore the slack matrix of a polytope is not a uniquely defined object. However every slack matrix of $P$ must contain rows and columns corresponding to facet-defining inequalities and vertices, respectively. 
As observed in \cite{FMPTW}, for proving bounds on the extension complexity of a polytope $P$ it suffices to
take any slack matrix of $P$.
Throughout the paper we refer to the minimal slack matrix of $P$ as \emph{the} slack matrix of $P$ and 
any other slack matrix as \emph{a} slack matrix of $P.$

A \emph{rank-$r$ nonnegative factorization} of a (nonnegative) 
matrix $M$ is a factorization $M = QR$ where $Q$ and $R$ are nonnegative 
matrices with $r$ columns (in case of \(Q\)) and $r$ rows 
(in case of \(R\)), respectively. The nonnegative rank of $M$ 
(denoted by: $\nnegrk(M)$) is thus simply the minimum rank of a 
nonnegative factorization of $M$. Note that $\nnegrk(M)$ is also 
the minimum $r$ such that $M$ is the sum of $r$ nonnegative rank-$1$ matrices. 
In particular, the nonnegative rank of a matrix $M$ is at least 
the nonnegative rank of any submatrix of $M$.

The following theorem shows the equivalence of nonnegative rank
of the slack matrix, extension and
size of an EF.

\begin{theorem}[Yannakakis \cite{Ya91}] \label{thm:factorization}
Let $P=\{x\in\RR^d\mid Ax\leqslant b\} = \conv (V)$ be a polytope 
with $\dim(P) \geqslant 1$ with a slack matrix $M$. 
Then the following are equivalent for all positive integers $r$:

\begin{enumerate}[(i)]
\item $M$ has nonnegative rank at most $r$;
\item $P$ has an extension of size at most $r$ (that is, with at most $r$ facets);
\item $P$ has an EF of size at most $r$ (that is, with at most $r$ inequalities).
\end{enumerate}
\end{theorem}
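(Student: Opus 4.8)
The plan is to establish the cycle $(ii)\Rightarrow(iii)\Rightarrow(i)\Rightarrow(iii)\Rightarrow(ii)$, treating $(ii)\Leftrightarrow(iii)$ as the easy, essentially bookkeeping part and putting the real work into $(iii)\Rightarrow(i)$ --- Yannakakis' slack‑decomposition idea --- and its converse $(i)\Rightarrow(iii)$. For $(ii)\Rightarrow(iii)$: if $Q\subseteq\RR^e$ is an extension of $P$ with at most $r$ facets, describe it as $Q=\{z\mid Cz\leqslant d\}$ using only its facet inequalities and let $\pi$ be a linear map with $\pi(Q)=P$; then the system $x=\pi(z),\ Cz+s=d,\ s\geqslant\mathbf{0}$ in variables $(x;(z,s))$ is an EF of $P$ of size $r$. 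Conversely, from an EF $Ex+Fy=g,\ y\geqslant\mathbf{0}$ of size $r$, the polyhedron $R=\{(x,y)\mid Ex+Fy=g,\ y\geqslant\mathbf{0}\}$ has at most $r$ facets (its only inequalities being the $r$ sign constraints) and projects onto $P$ under $(x,y)\mapsto x$; since $P$ is bounded, a routine argument replaces $R$ by a bounded extension with no more facets. I expect this part to be routine.

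For $(iii)\Rightarrow(i)$, fix an EF $Ex+Fy=g,\ y\geqslant\mathbf{0}$ of size $r$ with solution set $R$. Let $A_ix\leqslant b_i$ ($i\in[m]$) be the facet inequalities indexing the rows of the slack matrix $M$ and $v_1,\dots,v_n$ the vertices indexing its columns; for each $j$ pick $y_j\geqslant\mathbf{0}$ with $Ev_j+Fy_j=g$. Each $A_ix\leqslant b_i$ is valid for $P$, hence $b_i-A_ix\geqslant 0$ on all of $R$, so by the affine form of Farkas' lemma there exist $\lambda_i\geqslant\mathbf{0}$ in $\RR^r$, a free multiplier $\mu_i$, and a scalar $c_i\geqslant 0$ with $b_i-A_ix\equiv\lambda_i^{\trans}y+\mu_i^{\trans}(g-Ex-Fy)+c_i$ as an identity in $(x,y)$. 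Since $A_ix\leqslant b_i$ is facet‑defining, it is tight at some point of $P$; lifting that point into $R$ forces $c_i=0$. Evaluating the identity at $(v_j,y_j)$ kills the middle term and yields $M_{ij}=b_i-A_iv_j=\lambda_i^{\trans}y_j$; stacking the $\lambda_i^{\trans}$ as the rows of $L\in\RR_+^{m\times r}$ and the $y_j$ as the columns of $S\in\RR_+^{r\times n}$ gives $M=LS$, i.e.\ $\nnegrk(M)\leqslant r$. (For a non‑minimal slack matrix the $c_i$ need not vanish and one must absorb an extra rank‑one term $c\,\mathbf{1}^{\trans}$ using boundedness of $P$; this is the only delicate bookkeeping.)

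Finally, $(i)\Rightarrow(iii)$ is a direct construction. Suppose $M=LS$ with $L\in\RR_+^{m\times r}$, $S\in\RR_+^{r\times n}$, where the rows of $M$ come from a system $Ax\leqslant b$ describing $P$ (this may include the equations of $\mathrm{aff}(P)$ written as pairs of inequalities). Consider $Ax+Ly=b,\ y\geqslant\mathbf{0}$ in variables $(x,y)\in\RR^{d+r}$. If $x\in P$, write $x=\sum_j\lambda_jv_j$ with $\lambda\geqslant\mathbf{0}$, $\sum_j\lambda_j=1$, and put $y=\sum_j\lambda_jS^{j}\geqslant\mathbf{0}$; then $A_ix+L_iy=\sum_j\lambda_j(A_iv_j+M_{ij})=\sum_j\lambda_jb_i=b_i$ for all $i$, so $(x,y)$ is feasible. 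Conversely, if $(x,y)$ is feasible then $A_ix=b_i-L_iy\leqslant b_i$ for all $i$, since $L_i\geqslant\mathbf{0}$ and $y\geqslant\mathbf{0}$, whence $x$ satisfies everything defining $P$ and $x\in P$. So this is an EF of $P$ of size $r$, closing the cycle. The step I expect to be the main obstacle is the Farkas decomposition of the slacks in $(iii)\Rightarrow(i)$, in particular the argument that the constant term vanishes for facet rows (and stays under control otherwise).
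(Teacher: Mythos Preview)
The paper does not actually prove this theorem: it is quoted as a background result attributed to Yannakakis, and the surrounding text explicitly refers the reader to \cite{Ya91} and \cite{FMPTW} for details and proofs. So there is no proof in the paper to compare your attempt against.

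That said, what you wrote is precisely the standard Yannakakis argument as it appears in those references: the Farkas/LP-duality decomposition of each slack $b_i-A_ix$ as $\lambda_i^{\trans}y$ (plus a constant) for $(iii)\Rightarrow(i)$, and the explicit lifted system $Ax+Ly=b,\ y\geqslant\mathbf{0}$ for $(i)\Rightarrow(iii)$, are exactly the two directions in Yannakakis' original proof. Two places deserve one more sentence each. First, in $(i)\Rightarrow(iii)$, if $M$ happens to be the \emph{minimal} slack matrix and $P$ is not full-dimensional, the facet system $Ax\leqslant b$ alone does not cut out $P$; you must adjoin the affine-hull equations to your EF, which adds no inequalities and so keeps the size at $r$. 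You allude to this but do not make the fix explicit. Second, in $(iii)\Rightarrow(ii)$, replacing the possibly unbounded $R$ by a genuine polytope with at most $r$ facets is standard but not a throwaway remark; either cite it or note that the recession cone of $R$ projects to $\{0\}$ (since $P$ is bounded) and use that to pass to a bounded section without adding facets. Your handling of the constants $c_i$ for non-facet rows via the all-ones row and boundedness of $P$ is correct.
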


For a given matrix $M$
let $\suppmat(M)$ be the binary support matrix of $M$, so
$$
\suppmat(M)_{ab} = \left\{
\begin{array}{ll}
1 &\text{if } M_{ab}\neq 0,\\
0 &\text{otherwise}.
\end{array}
\right.
$$
A {\em rectangle} is the cartesian product of a 
set of row indices and a set of column indices. 
The {\em rectangle covering bound} is the minimum number of
monochromatic rectangles are needed to cover all the 1-entries of the 
support matrix of $M$.
In general it is difficult to calculate the nonnegative rank of a matrix
but sometimes a lower bound can be obtained as shown in the next theorem.

\begin{theorem}[Yannakakis \cite{Ya91}] \label{thm:nnegrkvsndetcc}
Let $M$ be any matrix with nonnegative real entries and 
$\suppmat(M)$ its support matrix. 
Then $\nnegrk(M)$ is lower bounded by the rectangle covering 
bound for $\suppmat(M)$.
\end{theorem}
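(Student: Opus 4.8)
The plan is to unpack the definition of nonnegative rank into rank-one summands and to exploit the fact that a sum of nonnegative matrices admits no cancellation, so that the supports of the summands are exactly monochromatic rectangles covering the support of $M$.

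First I would set $r = \nnegrk(M)$ and invoke the characterization recalled just before the statement: $M = \sum_{k=1}^{r} N_k$, where each $N_k$ is a nonnegative rank-one matrix, say $N_k = \vct{u}_k \vct{v}_k^{\trans}$ with $\vct{u}_k \in \RR_+^{m}$ and $\vct{v}_k \in \RR_+^{n}$. Put $R_k := \{\, a : (\vct{u}_k)_a \neq 0 \,\}$ and $C_k := \{\, b : (\vct{v}_k)_b \neq 0 \,\}$. Since $(N_k)_{ab} = (\vct{u}_k)_a (\vct{v}_k)_b$, we have $(N_k)_{ab} \neq 0$ if and only if $(a,b) \in R_k \times C_k$; that is, the support of $N_k$ is exactly the combinatorial rectangle $R_k \times C_k$.

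Next comes the key point: every entry in sight is nonnegative, so in $M_{ab} = \sum_{k} (N_k)_{ab}$ no cancellation can occur. Hence $M_{ab} \neq 0$ if and only if $(N_k)_{ab} \neq 0$ for some $k$; equivalently, $\suppmat(M)_{ab} = 1$ if and only if $(a,b) \in R_k \times C_k$ for some $k$. This establishes two things at once: each rectangle $R_k \times C_k$ contains only $1$-entries of $\suppmat(M)$ (it is monochromatic), and the $r$ rectangles $R_1 \times C_1, \dots, R_r \times C_r$ together cover every $1$-entry of $\suppmat(M)$. Therefore $\suppmat(M)$ has a cover by $r$ monochromatic rectangles, so the rectangle covering bound is at most $r = \nnegrk(M)$, which is the claim.

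As for obstacles, there is essentially no technical difficulty: the whole argument turns on the single observation that the support of a sum of nonnegative matrices is the union of the supports of the summands, combined with the elementary fact that a nonnegative rank-one matrix has rectangular support. The only thing worth a second glance is the handling of degenerate cases — a zero summand $N_k$ (which merely contributes an empty rectangle) or rows/columns of $M$ that vanish entirely — and these cause no problem, so I would dispatch them in a sentence.
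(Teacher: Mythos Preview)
Your argument is correct and is exactly the standard proof: write $M$ as a sum of $r$ nonnegative rank-one matrices, observe that each summand has rectangular support, and use nonnegativity to rule out cancellation so that these $r$ rectangles form a $1$-monochromatic cover of $\suppmat(M)$. The paper itself does not supply a proof of this theorem---it is quoted from Yannakakis~\cite{Ya91} as background---so there is nothing further to compare.
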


The following $2^n \times 2^n$ matrix $M^* = M^*(n)$ with rows and 
columns indexed by $n$-bit strings $a$ and $b$, and real nonnegative entries
$$
M^*_{ab} := (a^{\intercal} b -1)^2.
$$
is very useful for obtaining exponential bounds on the EF of various polytopes.
This follows from the following result.

\begin{theorem}[De Wolf \cite{Wo03}]\label{thm:coverlowerboundforM}
Every 1-monochromatic rectangle cover of $\suppmat(M^*(n))$ has size $2^{\Omega(n)}$.
\end{theorem}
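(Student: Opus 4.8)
The plan is to reduce the statement to Razborov's corruption bound for set disjointness, which is equivalent to the assertion that the one‑sided nondeterministic communication complexity of the \emph{unique disjointness} partial function is $\Omega(n)$. First I would pin down the combinatorial meaning of $\suppmat(M^*(n))$: indexing rows and columns by subsets $a,b\subseteq[n]$, the entry $(a,b)$ is $0$ exactly when $a^{\intercal}b=1$, i.e.\ $\abs{a\cap b}=1$, and $1$ otherwise. Write $B:=\{(a,b):\abs{a\cap b}=1\}$ for the set of $0$‑positions and $A:=\{(a,b):a\cap b=\emptyset\}$ for the set of disjoint pairs, which are $1$‑positions (there are further $1$‑positions, namely those with $\abs{a\cap b}\ge2$, but the lower bound only needs that $A$ must be covered). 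The key trivial observation is that any $1$‑monochromatic combinatorial rectangle $R=\mathcal A\times\mathcal B$ of $\suppmat(M^*(n))$ is necessarily disjoint from $B$.

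Next I would invoke (or, for a self‑contained account, reprove) Razborov's corruption lemma: there is a probability distribution $\mu$ on $2^{[n]}\times 2^{[n]}$ and absolute constants $\alpha>0$, $\beta>0$ such that $\mu(A)\ge\alpha$ and every combinatorial rectangle $R$ satisfies $\mu(R\cap A)\le 100\,\mu(R\cap B)+2^{-\beta n}$ for $n$ large; in particular a rectangle with $\mu(R\cap B)=0$ has $\mu(R\cap A)\le 2^{-\beta n}$. Granting this, the theorem is immediate: if $R_1,\dots,R_t$ are $1$‑monochromatic rectangles covering every $1$‑entry of $\suppmat(M^*(n))$, then in particular they cover $A$, each $R_i$ misses $B$, so $\alpha\le\mu(A)\le\sum_{i=1}^t\mu(R_i\cap A)\le t\,2^{-\beta n}$, whence $t\ge\alpha\,2^{\beta n}=2^{\Omega(n)}$.

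I expect the corruption lemma to be essentially the whole difficulty. It is worth noting \emph{why} an easier argument does not work: for plain disjointness the $2^n$ pairs $(a,[n]\setminus a)$ form a fooling set, but that breaks here, because for $a\neq a'$ the offending cross‑pair $(a,[n]\setminus a')$ need only have \emph{nonempty} intersection, and this intersection may have size $\ge2$, placing it among the don't‑care $1$‑entries rather than in $B$; so no small fooling set exists and a genuinely measure‑theoretic ("corruption") argument is forced. To prove the lemma one takes $\mu$ supported on sets of size $\Theta(n)$ in a ground set of size $\Theta(n)$, putting constant mass on disjoint pairs ($A$) and on uniquely‑intersecting pairs ($B$); the core estimate is that if a rectangle $\mathcal A\times\mathcal B$ carries non‑negligible $A$‑mass then $\mathcal A$ and $\mathcal B$ are each "spread out" enough that a suitable restriction/random‑coordinate argument produces a pair in $\mathcal A\times\mathcal B$ meeting in exactly one element, i.e.\ a point of $B$ — so a $B$‑avoiding rectangle can carry only exponentially small $A$‑mass. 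Since this lemma is exactly the content of De Wolf's (and, underneath, Razborov's) result, in the paper one would simply cite it; the outline above is how one would reconstruct it from the disjointness lower bound.
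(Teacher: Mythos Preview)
The paper does not give its own proof of this theorem; it is stated with attribution to De~Wolf \cite{Wo03} and used as a black box. Your outline is a correct reconstruction of the standard argument behind the cited result: identifying the $0$-entries of $\suppmat(M^*(n))$ with the uniquely-intersecting pairs and then applying Razborov's corruption bound for (unique) disjointness to conclude that any family of $B$-avoiding rectangles covering the disjoint pairs must have size $2^{\Omega(n)}$. This is precisely the route taken in the De~Wolf reference, so there is nothing to compare against within the present paper.
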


\begin{corollary} $\nnegrk{(M^*(n))} \geqslant 2^{\Omega(n)}.$\end{corollary}

Using these ingredients, Fiorini et al. \cite{FMPTW} proved the following fundamental result, 

\begin{theorem}[Lower Bound Theorem]\label{LBT}
Let $M(n)$ denote the slack matrix, of $\CutP_n$, extended with a suitably chosen set of $2^n$ redundant inequalities. 
Then $M^*(n-1)$ occurs as a submatrix of $M(n)$ and hence $\CutP_n$ has extension complexity $2^{\Omega(n)}.$
\end{theorem}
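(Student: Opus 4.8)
The plan is to exhibit the De Wolf matrix $M^{*}(n-1)$ as a submatrix of a slack matrix of $\CutP_n$ and then invoke the rectangle-covering machinery. I would first record two easy structural facts about $\CutP_n$: since $\delta(S)=\delta([n]\setminus S)$, its vertices are exactly the $2^{n-1}$ cut vectors $\delta(S)$ with $S\subseteq[n-1]$ (so vertex $n$ of $K_n$ plays the role of an apex), and I will index the columns of the slack matrix by the characteristic strings $b\in\{0,1\}^{n-1}$ of these sets. For the extra rows I would take, for each string $a\in\{0,1\}^{n-1}$, the hypermetric-type inequality furnished by Lemma~\ref{lem:hyp} with coefficient vector $c=(c_1,\dots,c_n)$ given by $c_i:=a_i$ for $i\in[n-1]$ and $c_n:=2-\sum_{i\in[n-1]}a_i$. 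These coefficients are integers, so $\sum_{1\le i<j\le n}c_ic_jx_{ij}\le\lfloor(\sum_ic_i)^2/4\rfloor$ is valid for $\CutP_n$; and since $\sum_{i=1}^{n}c_i=2$ the right-hand side is $\lfloor 4/4\rfloor=1$, so the floor plays no role. (Up to the covariance map these are the standard correlation inequalities, but the whole argument stays inside $\CutP_n$.)

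The computational core is the slack evaluation. By~\eqref{eq:hyp1}, the slack of the vertex $\delta(S)$ (with $S\subseteq[n-1]$) in the row indexed by $a$ equals $1-\bigl(\sum_{i\in S}c_i\bigr)\bigl(\sum_{i\notin S}c_i\bigr)$, where the second sum runs over $([n-1]\setminus S)\cup\{n\}$. Putting $\abs{a}:=\sum_{i\in[n-1]}a_i$ and $t:=a^{\intercal}b=\abs{\{i\in S:a_i=1\}}$, one finds $\sum_{i\in S}c_i=t$ and $\sum_{i\notin S}c_i=(2-\abs{a})+(\abs{a}-t)=2-t$, so the slack is $1-t(2-t)=(t-1)^2=(a^{\intercal}b-1)^2=M^{*}(n-1)_{ab}$. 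Hence the $2^{n-1}\times2^{n-1}$ block of the slack matrix determined by these extra rows and all columns is exactly $M^{*}(n-1)$; adjoining these rows enlarges the slack matrix by at most $2^{n}$ (generally redundant) valid inequalities, as claimed.

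It remains to draw the conclusion. The enlarged matrix $M(n)$, consisting of the minimal slack matrix of $\CutP_n$ together with the rows just constructed, is again a slack matrix of $\CutP_n$, all of whose rows are valid inequalities and all of whose columns are vertices. By the observation of Fiorini et al.\ recalled above, the nonnegative rank of any slack matrix of a polytope $P$ equals $\xc(P)$; and the nonnegative rank of a matrix is at least that of any of its submatrices. Hence $\xc(\CutP_n)=\nnegrk(M(n))\ge\nnegrk(M^{*}(n-1))\ge2^{\Omega(n-1)}=2^{\Omega(n)}$, the last inequality being the corollary to Theorem~\ref{thm:coverlowerboundforM}.

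I expect the only genuine obstacle to be guessing the right family of inequalities. The key realization is that forcing the total weight $\sum_{i}c_i$ to equal $2$ — equivalently, taking $c_n:=2-\abs{a}$ — is precisely what turns the generic hypermetric slack $(\sum_{i\in S}c_i)(\sum_{i\notin S}c_i)$ of~\eqref{eq:hyp1} into the square $(a^{\intercal}b-1)^2$ defining $M^{*}$. Once that is seen, what is left — checking validity, counting the vertices of $\CutP_n$ correctly, and the routine bookkeeping in the slack computation — presents no difficulty.
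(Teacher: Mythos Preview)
Your proposal is correct and is essentially the same argument the paper gives (in Section~\ref{sec:ext_cut}): both pick, for each $a\in\{0,1\}^{n-1}$, the negative-type inequality from Lemma~\ref{lem:hyp} with weights $a_1,\dots,a_{n-1}$ on $[n-1]$ and weight $2-\sum_i a_i$ on vertex~$n$, then compute the slack at the cut $\delta(S)$, $S\subseteq[n-1]$, via~\eqref{eq:hyp1} to obtain $(a^{\intercal}b-1)^2$. Your write-up in fact avoids a small typo in the paper (where ``$b_n=3-|T|$'' should read ``$b_n=2-|T|$'' to be consistent with the base case $b_n=3-n$ and with Lemma~\ref{lem:slack}).
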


They further proved a $2^{\Omega(\sqrt n)}$ lower bound on the size of extended formulations for 
the travelling salesman polytope, $\TSP(n)$, by embedding $\CutP_n$ as a face of $\TSP(m)$ where $m=O(n^2).$ 
A similar embedding argument was used to show the same lower bound applies to the stable set polytope,
$\STAB(n)$.


\subsection{Proving lower bounds for extension complexity}\label{subsec:proving_lower_bound}
\label{method}
Suppose one wants to prove a lower bound on the extension complexity for a polytope $P$. 
Theorem \ref{LBT} provides a way to do it from scratch: construct a non-negative matrix that has a high non-negative rank and 
then show that this matrix occurs as a submatrix of a slack matrix of $P.$ 
Clearly this can be very tricky since there exists neither a general framework for creating such a matrix for each polytope, 
nor a general way of using a result for one class of polytopes for another.

We now note two observations that are useful in translating results from one polytope to another. Let $P$ and $Q$ be two polytopes. Then,

\begin{proposition} \label{prop:ef} If $P$ is a projection of $Q$ then $\xc{(P)}\leqslant\xc(Q).$\end{proposition}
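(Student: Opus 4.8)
The statement to prove is Proposition~\ref{prop:ef}: if $P$ is a projection of $Q$ then $\xc(P) \leqslant \xc(Q)$.

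The plan is to argue directly from the definition of extension and the transitivity of linear maps. First I would unwind the hypothesis: saying $P$ is a projection of $Q$ means there is a linear map $\pi$ with $\pi(Q) = P$. Next, let $Q'$ be a minimum-size extension of $Q$, so that $\xc(Q) = $ (number of facets of $Q'$) and there is a linear map $\sigma$ with $\sigma(Q') = Q$. The key step is then simply to compose: $\pi \circ \sigma$ is again a linear map, and $(\pi \circ \sigma)(Q') = \pi(\sigma(Q')) = \pi(Q) = P$. Hence $Q'$ is itself an extension of $P$, so $\xc(P)$, being the minimum size over all extensions of $P$, is at most the size of $Q'$, which equals $\xc(Q)$. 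That gives the inequality.

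Alternatively, one can phrase the whole argument through Theorem~\ref{thm:factorization} and nonnegative rank, which may be cleaner if one wants to keep everything on the level of slack matrices: a projection relationship means that the vertices of $P$ are images of points of $Q$ and every valid inequality for $P$ pulls back to a valid inequality for $Q$, which exhibits a slack matrix of $P$ (with respect to its facets and vertices) as (up to deleting rows/columns and relabelling) a submatrix of a suitable slack matrix of $Q$; since nonnegative rank only decreases under taking submatrices, $\nnegrk$ of the slack matrix of $P$ is at most $\nnegrk$ of the slack matrix of $Q$, and Theorem~\ref{thm:factorization} converts this back into $\xc(P) \leqslant \xc(Q)$. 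I would probably present the first, purely geometric argument as the main proof since it is a one-line composition, and mention the slack-matrix viewpoint only if it is needed elsewhere.

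The main obstacle is really just bookkeeping rather than mathematics: one must be slightly careful that the notion of "projection" here is allowed to be an arbitrary affine/linear map (not merely a coordinate projection), so that composition stays within the same class of maps, and that "size of an extension" is measured by number of facets so that reusing $Q'$ verbatim does not change the count. There is no genuine difficulty — the statement is essentially the observation that "is a linear image of" is a transitive relation, combined with the definition of $\xc$ as an infimum over extensions. I would keep the proof to two or three sentences.
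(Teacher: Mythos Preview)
Your argument is correct. The composition-of-linear-maps reasoning is exactly the standard justification for this fact, and your bookkeeping remarks about allowing arbitrary linear (affine) maps and measuring size by facet count are on point.

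There is nothing to compare against, however: the paper does not actually prove Proposition~\ref{prop:ef}. It simply states it (together with Proposition~\ref{prop:face}) as an ``observation'' in Section~\ref{subsec:proving_lower_bound} and uses it as a black box thereafter. Your two- or three-sentence geometric proof would be a perfectly appropriate addition; the alternative slack-matrix route you sketch is also valid but, as you note, unnecessary here.
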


\begin{proposition} \label{prop:face}If $P$ is a face of $Q$ then $\xc{(P)}\leqslant\xc(Q).$\end{proposition}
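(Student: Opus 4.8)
The plan is to prove Proposition~\ref{prop:face} by exhibiting an explicit extension of $P$ of size $\xc(Q)$, obtained by intersecting an optimal extension of $Q$ with a suitable affine subspace. Concretely, let $Q \subseteq \RR^e$ and let $R \subseteq \RR^f$ be an extension of $Q$ of minimum size, so that $Q = \pi(R)$ for some linear map $\pi \colon \RR^f \to \RR^e$ and $R$ has exactly $\xc(Q)$ facets. Since $P$ is a face of $Q$, there is a valid inequality $c^\trans x \leqslant \gamma$ for $Q$ with $P = Q \cap \{x : c^\trans x = \gamma\}$. I would then set $R' := R \cap \{y \in \RR^f : c^\trans \pi(y) = \gamma\}$, i.e.\ the preimage under $\pi$ of the supporting hyperplane, intersected with $R$. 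The claim is that $R'$ is an extension of $P$: applying $\pi$ to $R'$ lands inside $Q \cap \{c^\trans x = \gamma\} = P$, and conversely every point of $P$ lies in $Q$, hence is $\pi(y)$ for some $y \in R$, and that $y$ automatically satisfies $c^\trans \pi(y) = \gamma$, so $y \in R'$; thus $\pi(R') = P$.

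The remaining step is to bound the number of facets of $R'$ by that of $R$. Here I would use the standard fact that slicing a polytope by a hyperplane cannot increase the number of facets: each facet of $R' = R \cap H$ (where $H$ is the hyperplane $\{c^\trans \pi(y) = \gamma\}$) is either the intersection with $H$ of a facet of $R$, or is $R \cap H$ itself contained in $\partial H$; in either case the facets of $R'$ inject into the facets of $R$ together with at most the single extra candidate $H$. A cleaner way to see it, and the one I would actually write, is via Theorem~\ref{thm:factorization}: it suffices to produce an extended formulation of $P$ with $\xc(Q)$ inequalities, and one gets this immediately by taking an EF $\{Ex + Fy = g,\ y \geqslant \mathbf 0\}$ of $Q$ of size $\xc(Q)$ and simply appending the single \emph{equation} $c^\trans x = \gamma$ (equations do not count toward the size, which counts only inequalities). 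The resulting system describes exactly $\{x : x \in Q,\ c^\trans x = \gamma\} = P$ and has the same number of inequalities, namely $\xc(Q)$; by Theorem~\ref{thm:factorization} this yields an extension of $P$ of size at most $\xc(Q)$.

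A subtlety worth a sentence is the edge case $\dim(P) = 0$ (or $\dim(P) \leqslant 1$), where Theorem~\ref{thm:factorization} as stated does not directly apply; but a single point or segment trivially has extension complexity at most $2$, which is $\leqslant \xc(Q)$ whenever $Q$ is a genuine higher-dimensional polytope, so the inequality holds vacuously in those cases and no separate argument is needed beyond noting it. I do not expect any real obstacle here: the content is entirely the observation that adding an equation to an EF is free, and the proof is a two-line application of Yannakakis's factorization theorem. The only thing to be careful about is phrasing the argument in terms of EFs rather than extensions, so that the ``equations are free'' point is available; proving it directly at the level of polytopes (counting facets of a hyperplane section) is also fine but slightly more fiddly because one must argue that the section is still full-dimensional in the hyperplane and that no facet of $R$ disappears into lower dimension in a way that spawns new facets.
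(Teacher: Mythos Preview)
The paper does not actually supply a proof of Proposition~\ref{prop:face}; it is stated as an observation (together with Proposition~\ref{prop:ef}) in Section~\ref{method} and then invoked repeatedly without further justification. Your argument via extended formulations---take a minimum-size EF $\{Ex+Fy=g,\ y\geqslant\mathbf{0}\}$ of $Q$ and append the single face-defining equation $c^{\trans}x=\gamma$, observing that equations do not contribute to the size---is correct and is exactly the standard justification one would give; it matches the spirit in which the paper treats these propositions as ``obvious'' folklore. The geometric variant you sketch (intersecting an optimal extension $R$ with the pullback hyperplane $\pi^{-1}(H)$) is the same idea at the level of polytopes, and your instinct to prefer the EF phrasing to sidestep the facet-counting subtleties is sound. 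The low-dimensional edge case is harmless: if $\dim(P)=0$ then $P$ is a point and $\xc(P)=0$, so the inequality holds trivially.
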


Naturally there are many other cases where the conditions of neither of these propositions apply and yet a lower bounding argument for one polytope can be derived from another.
However we would like to point out that these two propositions already seem to be very powerful. 
In fact, out of the three lower bounds proved by Fiorini et. al. \cite{FMPTW} two (for $\TSP(n)$ and $\STAB(n)$) use these propositions, 
while the lower bound on the cut polytope is obtained by showing a direct embedding of $M^*(n)$ in the slack matrix of $\CutP_n.$

Fiorini et. al. \cite{FMPTW} first show $M^*(n)$ is a submatrix of the slack matrix of the correlation polytope $\CorP_n$ 
and then use its affine equivalence with $\CutP_{n+1}.$ This is followed by an embedding of $\CutP_n$ 
as a face of $\STAB(G(n))$ where $G(n^2)$ is a graph with $O(n^2)$ vertices and $O(n^2)$ edges 
implying a \emph{worst case} lower bound of $2^{\Omega(\sqrt{n})}$ for the extension complexity 
of the stable set polytope of a graph with $n$ vertices. Similarly, worst case 
lower bounds are obtained for the traveling salesman polytope by embedding $\CorP_n$ in a face of $\TSP(n^2).$

In the next section we will use these propositions to show superpolynomial lower bounds on the extension complexities of polytopes associated with four NP-hard problems.

\section{Polytopes for some NP-hard problems}\label{sec:examples}

In this section we use the method of Section \ref{method} to show super polynomial extension complexity for polytopes
related to the following problems: subset sum, 3-dimensional matching and stable set for cubic planar graphs.
These proofs are derived by applying this method to standard reductions from 3SAT, which is our starting point.

\subsection{3SAT}\label{subsec:3sat}
For any given 3SAT formula $\Phi$ with $n$ variables in conjunctive normal form define the polytope $\SAT(\Phi)$ as the convex hull of all satisfying assignments. That is, $$\SAT(\Phi):= \conv(\{x\in[0,1]^n\mid \Phi(x)=1 \})$$

The following theorem and its proof are implicit in \cite{FMPTW}, who make use of the correlation polytope.
We provide the proof for completeness, stated this time in terms of the cut polytope. 

\begin{theorem} For every $n$ there exists a 3SAT formula $\Phi$ with $O(n)$ variables and $O(n)$ clauses such that $\xc(\SAT(\Phi))\geqslant 2^{\Omega(\sqrt n)}$. 
\end{theorem}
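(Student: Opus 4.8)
The plan is to exhibit, for each $n$, a 3SAT instance $\Phi$ whose satisfying-assignment polytope $\SAT(\Phi)$ has $\CutP_{m}$ (equivalently $\CorP_{m-1}$, equivalently the matrix $M^*(m-1)$) sitting inside it in a way that Propositions~\ref{prop:ef} and~\ref{prop:face} can exploit, with $m = \Theta(n^2)$ so that the $2^{\Omega(m^{1/2})} = 2^{\Omega(n)}$ bound of Theorem~\ref{LBT} survives. Concretely, I would work with the slack matrix $M^*(n)$ with entries $(a^\trans b - 1)^2$. The square $(a^\trans b-1)^2$ expands to $1 - 2a^\trans b + (a^\trans b)^2$, and $(a^\trans b)^2 = \sum_{i,j} a_i a_j b_i b_j$ is a quadratic (multilinear, since $a_i^2=a_i$) form in the bits. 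So each entry of $M^*(n)$ is an affine function of the products $a_i a_j$ (over $i\le j$) evaluated against the corresponding products of $b$ — this is exactly the statement that $M^*(n)$ is a submatrix of the slack matrix of $\CorP_n$, via the covariance map to $\CutP_{n+1}$, as recalled in the Lower Bound Theorem.

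The bridge to 3SAT is the observation that ``$y_{ij} = x_i x_j$ for all $i,j$'' is expressible by a constant number of clauses per pair once one introduces an auxiliary variable $y_{ij}$: the standard gadget $y_{ij} \le x_i$, $y_{ij}\le x_j$, $y_{ij}\ge x_i + x_j - 1$ together with integrality, which in CNF over $0/1$ variables is $(\lnot y_{ij}\vee x_i)\wedge(\lnot y_{ij}\vee x_j)\wedge(y_{ij}\vee \lnot x_i\vee\lnot x_j)$ — three clauses, each of width $\le 3$. Doing this for all $\binom{n}{2}$ pairs (plus the diagonal, where $y_{ii}=x_i$ is free) uses $O(n^2)$ variables and $O(n^2)$ clauses, so renaming $N := n + \binom{n+1}{2} = \Theta(n^2)$, the instance $\Phi$ has $O(N)$ variables and $O(N)$ clauses — but I should be careful to phrase the theorem so the parameter count matches: the theorem asks for $O(n)$ variables and clauses with bound $2^{\Omega(\sqrt n)}$, so I would simply take the instance just described on the variables for $\CorP_k$ with $k = \Theta(\sqrt n)$, giving $O(n)$ variables, $O(n)$ clauses, and bound $2^{\Omega(k)} = 2^{\Omega(\sqrt n)}$. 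The key structural claim is then: $\SAT(\Phi)$, intersected with the face $\{x_i = \text{(appropriate fixing)}\}$ or rather \emph{projected} onto the $y$-coordinates, equals (an affine copy of) $\CorP_k$; hence by Proposition~\ref{prop:ef} $\xc(\SAT(\Phi)) \ge \xc(\CorP_k) = \xc(\CutP_{k+1}) = 2^{\Omega(k)}$.

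The step I expect to require the most care is verifying that the polytope relationship is genuinely a projection (or a face, whichever is cleaner), not merely that the vertex sets biject. The subtlety: $\SAT(\Phi)$ is the convex hull of the $0/1$ points satisfying $\Phi$, and on those points $y_{ij}$ is forced to equal $x_i x_j$, so the vertices of $\SAT(\Phi)$ are exactly the lifted correlation vectors $(\chi_S, \pi(S))$ for $S\subseteq[k]$. Projecting away the $x$-block (or the $y$-block) then literally gives $\conv\{\pi(S)\} = \CorP_k$ as a projection of $\SAT(\Phi)$, so Proposition~\ref{prop:ef} applies directly and cleanly — no face argument is even needed. I would double-check that the CNF gadget does not accidentally admit fractional or extra integral solutions that would enlarge the vertex set (it does not, over $\{0,1\}$, because the three clauses pin $y_{ij}$ exactly when $x_i,x_j\in\{0,1\}$), and that the reduction's size bookkeeping gives the claimed $O(n)$/$O(n)$ counts with the $\sqrt n$ in the exponent. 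Finally I would invoke Theorem~\ref{LBT} (through the affine equivalence $\CorP_k \cong \CutP_{k+1}$) to conclude $\xc(\CorP_k) = 2^{\Omega(k)}$, and combine.
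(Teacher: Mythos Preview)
Your approach is correct and structurally the same as the paper's: build a 3CNF on $\Theta(k^2)$ variables whose satisfying assignments biject with the vertices of a polytope of extension complexity $2^{\Omega(k)}$, then apply Proposition~\ref{prop:ef}. The only real difference is the choice of gadget and target polytope. The paper encodes $x_{ij} = x_{ii}\oplus x_{jj}$ with four width-3 clauses per pair $i\neq j$, so that projecting out the diagonal variables $x_{ii}$ lands directly on $\CutP_m$; you encode $y_{ij}=x_i\wedge x_j$ with three clauses and land on $\CorP_k$, then invoke the covariance map to $\CutP_{k+1}$. The paper explicitly notes that the correlation-polytope version is the original argument from \cite{FMPTW} and that its contribution here is merely to phrase it via $\CutP$ directly --- so you have in effect reconstructed the FMPTW proof rather than the paper's variant. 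Both buy exactly the same bound with the same parameter bookkeeping.

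One small slip to fix: your parenthetical ``(or the $y$-block)'' is wrong --- projecting away the $y$-block leaves $\conv\{\chi_S\} = [0,1]^k$, not $\CorP_k$. Either project onto the $y$-block including the diagonal, or (cleanest) do not introduce separate $y_{ii}$ at all and simply identify $x_i$ with the diagonal coordinate, in which case $\SAT(\Phi)$ \emph{is} $\CorP_k$ on the nose and no projection is needed. Also, two of your three clauses have width 2; pad them to width 3 if you want $\Phi$ to be a strict 3CNF.
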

\label{3dmthm}
\begin{proof}
For the complete graph $K_m$ we define a boolean formula $\Phi_m$ in conjunctive normal form over the variables $x_{ij}$ for $i,j\in\{1,\ldots,m\}$ such that every clause in $\Phi_m$ has three literals and $\CutP(K_m)$ is a projection of $\SAT(\Phi_m).$

Consider the relation $x_{ij}=x_{ii}\oplus x_{jj},$ where $\oplus$ is the xor operator. The boolean formula $$(x_{ii}\lor\overline{x}_{jj}\lor x_{ij})\land(\overline{x}_{ii}\lor{x_{jj}}\lor {x_{ij}})\land({x_{ii}}\lor{x_{jj}}\lor \overline{x}_{ij})\land(\overline{x}_{ii}\lor\overline{x}_{jj}\lor \overline{x}_{ij})$$ is true if and only if $x_{ij}=x_{ii}\oplus x_{jj}$ for any assignment of the variables $x_{ii},x_{jj}$ and $x_{ij}.$

Now define $\Phi_m$ as 
$$\Phi_m := \bigwedge_{i,j \in [m] \atop i \neq j} \left[ (x_{ii}\lor\overline{x}_{jj}\lor x_{ij})\land(\overline{x}_{ii}\lor{x_{jj}}\lor {x_{ij}})\land({x_{ii}}\lor{x_{jj}}\lor \overline{x}_{ij})\land(\overline{x}_{ii}\lor\overline{x}_{jj}\lor \overline{x}_{ij}) \right].
$$

It is easy to see that any vertex of $\SAT(\Phi_m)$ can be projected to a vertex of $\CutP(K_m)$ by projecting out the variables $x_{ii}$ for $i\in\{1,\ldots,m\}$ since $x_{ij}=1$ if and only if $x_{ii}$ and $x_{jj}$ are assigned different values, and hence the assignment defines a cut in $K_m.$ Furthermore, any vertex of $\CutP(K_m)$ can be extended to any of the two assignments that correspond to the cut defined by the vector. That is, if a cut vector of $\CutP(K_m)$ partitions the set of vertices into $S$ and $\overline{S}$ then extending the cut vector by assigning $x_{ii}=1$ if $i\in S$ and $x_{ii}=0$ if $i\in\overline{S}$ (or the other way round) defines a satisfying assignment for $\Phi_m$ and therefore a vertex of $\SAT(\Phi_m).$

Therefore, $\CutP(K_m)$ is a projection of $\SAT(\Phi_m),$ and by Proposition 1 we can conclude that $\xc(\SAT(\Phi_m))\geqslant \xc(\CutP(K_m)) \geqslant 2^{\Omega(m)}.$ Note that $\Phi_m$ has $O(n^2)$ variables and clauses. Therefore, we have the desired result.
\end{proof}

\subsection{Subset sum}
The subset sum problem is a special case of the knapsack problem. Given a set of $n$ integers $A=\{a_1,\ldots,a_n\}$ and another integer $b,$ the subset sum problems asks whether any subset of $A$ sums exactly to $b.$ Define the subset sum polytope $\SUBSETSUM(A,b)$ as the convex hull of all characteristic vectors of the subsets of $A$ whose sum is exactly $b.$
$$\SUBSETSUM(A,b):=\text{conv}\left(\left\{x\in[0,1]^n\mid \sum_{i=1}^n a_i x_i=b\right\}\right)$$

The subset sum problem then is asking whether $\SUBSETSUM(A,b)$ is empty for a given set $A$ and 
integer $b.$ Note that this polytope is a face of the knapsack polytope 
$$\Knapsack(A,b):=\text{conv}\left(\left\{x\in[0,1]^n\mid \sum_{i=1}^n a_i x_i\leqslant b\right\}\right)$$

In this subsection we prove that the subset sum polytope (and hence the knapsack polytope) can have superpolynomial extension complexity.

\begin{theorem} For every 3SAT formula $\Phi$ with $n$ variables and $m$ clauses, there exists a set of integers $A(\Phi)$ and integer $b$ with $|A|=2n+2m$ such that $\SAT(\Phi)$ is the projection of $\SUBSETSUM(A,b).$
\end{theorem}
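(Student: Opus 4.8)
The plan is to encode a 3SAT formula $\Phi$ as a subset-sum instance using the classical textbook reduction (the one built on base-10 or large-base positional notation), and then observe that this reduction is faithful enough at the level of solution sets that the subset-sum polytope projects onto $\SAT(\Phi)$. Concretely, I would introduce, for each of the $n$ variables, two numbers $a_i, a_i'$ (intended to encode ``set $x_i$ true'' versus ``set $x_i$ false''), and for each of the $m$ clauses, two ``slack'' numbers $s_j, s_j'$ that absorb the surplus coming from the number of literals satisfied in clause $j$. Working in base $4$ (or any base $\ge 4$, to prevent carries), each number is a digit string with one digit position per variable and one digit position per clause: $a_i$ and $a_i'$ both have a $1$ in the variable-$i$ position, $a_i$ has a $1$ in the position of each clause in which $x_i$ appears positively, and $a_i'$ has a $1$ in the position of each clause in which $x_i$ appears negatively; the slacks $s_j, s_j'$ each have a $1$ in the clause-$j$ position only. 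The target $b$ has a $1$ in every variable position and a $3$ in every clause position. This gives $|A| = 2n + 2m$ as required.

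First I would verify the standard correctness of the reduction at the level of \emph{which subsets hit the target}: choosing exactly one of $\{a_i, a_i'\}$ for each $i$ is forced by the variable digits (since the two slacks never touch variable positions and the base is large enough that no carrying occurs), and this choice corresponds to a truth assignment; the clause-$j$ digit of the sum of the chosen $a$'s equals the number of literals satisfying clause $j$, which lies in $\{0,1,2,3\}$, and we can top it up to $3$ using $s_j$ and/or $s_j'$ iff that number is at least $1$, i.e.\ iff the clause is satisfied. So the $0/1$ solutions of $\sum a x = b$ are exactly the vectors $(x, x', s, s')$ where $x'_i = 1 - x_i$, $x$ is a satisfying assignment, and $(s_j, s_j')$ records the (nonnegative, $\le 2$) deficit $3 - (\text{number of satisfied literals in clause } j)$ as a sum of two bits — a choice that exists precisely when the deficit is $0$, $1$, or $2$, never $3$.

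Next I would define the linear projection $\proj$ that simply forgets the $x'$, $s$, and $s'$ coordinates, keeping the $x$ coordinates. The two containments are then routine: every vertex of $\SUBSETSUM(A,b)$ is a $0/1$ solution, hence its $x$-part is a satisfying assignment and lies in $\SAT(\Phi)$; conversely every satisfying assignment $x$ extends (by $x'_i := 1-x_i$ and by picking, for each clause, a valid bit-decomposition of its deficit) to a $0/1$ solution of $\sum a x = b$, hence to a vertex of $\SUBSETSUM(A,b)$ that projects to $x$. Since both polytopes are the convex hulls of their $0/1$ points, and $\proj$ is linear, $\proj(\SUBSETSUM(A,b)) = \conv(\proj(\{0/1 \text{ solutions}\})) = \conv(\{\text{satisfying assignments}\}) = \SAT(\Phi)$.

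The main obstacle is making sure the reduction produces a clean \emph{projection of polytopes} rather than merely a polynomial-time many-one reduction of the decision problems: this requires that (a) the correspondence be a genuine bijection between $0/1$ solutions on one side and satisfying assignments on the other after forgetting auxiliary coordinates — or at least that the image of the $0/1$ solution set is exactly the satisfying-assignment set — and (b) there be no spurious integral or fractional solutions to $\sum a x = b$ in $[0,1]^{2n+2m}$ whose $x$-part is not a satisfying assignment. Point (b) is the delicate one: one must check that a fractional $x \in [0,1]^{2n+2m}$ satisfying the single equation $\sum a_i x_i = b$ need not have its $x$-part lie in $\SAT(\Phi)$, but this does not matter, because $\SUBSETSUM(A,b)$ is defined as the convex hull of the \emph{integer} points only, so the linear image is controlled entirely by the integer points and the carry-free base argument. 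I would make the no-carry argument precise (base $\ge 4$ suffices since each digit position of $b$ is at most $3$ and each column of the number matrix has entries summing to at most $3$) and then the rest is bookkeeping; combined with Proposition~\ref{prop:ef} and the previous theorem this immediately yields $\xc(\SUBSETSUM(A(\Phi),b)) \ge 2^{\Omega(\sqrt{n})}$ for a suitable family of formulas, and the same for $\Knapsack$ via Proposition~\ref{prop:face}.
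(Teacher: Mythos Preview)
Your approach is essentially identical to the paper's: both use the classical textbook 3SAT-to-subset-sum reduction and then observe that forgetting the auxiliary coordinates gives a linear projection between the $0/1$ vertex sets, hence between their convex hulls (the paper uses slacks $\{1,2\}$ with clause-target digit $4$ in base $10$, you use $\{1,1\}$ with target $3$ --- a cosmetic difference). One bookkeeping slip: clause columns of your number matrix have entries summing to $3+1+1=5$, not $3$, so your stated justification for base $4$ is wrong; either argue more carefully that no carry can hit the target digit, or simply take base $\ge 6$.
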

\label{ssthm}
\begin{proof}
Suppose formula $\Phi$ is defined in terms of variables $x_1, x_2, ... , x_n$ and clauses $C_1, C_2, ..., C_m$.
We use a standard reduction from 3SAT to subset sum (e.g., \cite{corman}, Section 34.5.5). We define $A(\Phi)$ and $b$ as follows. Every integer in $A(\Phi)$ as well as $b$ is an $(n+m)$-digit number (in base 10). The first $n$ bits correspond to the variables and the last $m$ bits correspond to each of the  clauses.
$$ b_j=\begin{cases} 1, & \mbox{if } 1\leqslant j\leqslant n \\4, & \mbox{if } n+1\leqslant j\leqslant n+m \end{cases}.$$

Next we construct $2n$ integers $v_i, v'_i$ for $i\in\{1,\ldots,n\}.$
$$ v_{ij}=\begin{cases} 1, & \mbox{if } j=i \mbox{ or } x_i\in C_{j-n}\\0, & \mbox{otherwise} \end{cases},$$
$$ {v'}_{ij}=\begin{cases} 1, & \mbox{if } j=i \mbox{ or } \overline{x}_i\in C_{j-n}\\0, & \mbox{otherwise} \end{cases}.$$

Finally, we construct $2m$ integers $s_i, s'_i$ for $i\in\{1,\ldots,m\}.$
$$ s_{ij}=\begin{cases} 1, & \mbox{if } j=n+i \\0, & \mbox{otherwise} \end{cases},$$
$$ s'_{ij}=\begin{cases} 2, & \mbox{if } j=n+i \\0, & \mbox{otherwise} \end{cases}.$$

We define the set $A(\Phi)=\{v_1,\ldots,v_n,v'_1,\ldots,v'_n,s_1,\ldots,s_{m},s'_1,\ldots,s'_m\}.$ Table \ref{tab:subset_sum} illustrates the construction for the 3SAT formula $(x_{1}\lor\overline{x}_{2}\lor x_{3})\land(\overline{x}_{1}\lor{x_{2}}\lor {x_{3}})\land({x_{1}}\lor{x_{2}}\lor \overline{x}_{3})\land(\overline{x}_{1}\lor\overline{x}_{2}\lor \overline{x}_{3}).$

\begin{table}[!ht]
 \centering
 \begin{tabular}{|r c c c c c c c c|}
  \hline
  & & $x_1$ & $x_2$ & $x_3$ & $C_1$ & $C_2$ & $C_3$ & $C_4$ \\
  \hline
  $v_1$ & = & 1 & 0 & 0 & 1 & 0 & 1 & 0 \\
  $v'_1$ & = & 1 & 0 & 0 & 0 & 1 & 0 & 1 \\
  $v_2$ & = & 0 & 1 & 0 & 0 & 1 & 1 & 0 \\
  $v'_2$ & = & 0 & 1 & 0 & 1 & 0 & 0 & 1 \\
  $v_3$ & = & 0 & 0 & 1 & 1 & 1 & 0 & 0 \\
  $v'_3$ & = & 0 & 0 & 1 & 0 & 0 & 1 & 1 \\
  $s_1$ & = & 0 & 0 & 0 & 1 & 0 & 0 & 0 \\
  $s'_1$ & = & 0 & 0 & 0 & 2 & 0 & 0 & 0 \\
  $s_2$ & = & 0 & 0 & 0 & 0 & 1 & 0 & 0 \\
  $s'_2$ & = & 0 & 0 & 0 & 0 & 2 & 0 & 0 \\
  $s_3$ & = & 0 & 0 & 0 & 0 & 0 & 1 & 0 \\
  $s'_3$ & = & 0 & 0 & 0 & 0 & 0 & 2 & 0 \\
  $s_4$ & = & 0 & 0 & 0 & 0 & 0 & 0 & 1 \\
  $s'_4$ & = & 0 & 0 & 0 & 0 & 0 & 0 & 2 \\
  \hline
  $b$ & = & 1 & 1 & 1 & 4 & 4 & 4 & 4 \\  
  \hline
 \end{tabular}
 \caption{The base $10$ numbers created as an instance of subset-sum for the 3SAT formula $(x_{1}\lor\overline{x}_{2}\lor x_{3})\land(\overline{x}_{1}\lor{x_{2}}\lor {x_{3}})\land({x_{1}}\lor{x_{2}}\lor \overline{x}_{3})\land(\overline{x}_{1}\lor\overline{x}_{2}\lor \overline{x}_{3}).$}\label{tab:subset_sum}
\end{table}

Consider the subset-sum instance with $A(\Phi), b$ as constructed above for any 3SAT instance $\Phi.$ Let $S$ be any subset of $A(\Phi).$ If the elements of $S$ sum exactly to $b$ then it is clear that for each $i\in\{1,\ldots,n\}$ exactly one of $v_i,v'_i$ belong to $S.$ Furthermore, setting $x_i=1$ if $v_i\in S$ or $x_i=0$ if $v'_i\in S$ satisfies every clause. Thus the characteristic vector of $S$ restricted to $\{v_1,\ldots,v_n\}$ is a satisfying assignment for the corresponding SAT formula.

Also, if $\Phi$ is satisfiable then the instance of subset sum thus created has a solution corresponding to each satisfying assignment: Pick $v_i$ if $x_i=1$ or $v'_i$ if $x_i=0$ in an assignment. Since the assignment is satisfying, every clause is satisfied and so the sum of digits corresponding to each clause is at least $1.$ Therefore, for a clause $C_j$ either $s_j$ or $s'_j$ or both can be picked to ensure that the sum of the corresponding digits is exactly $4.$ Note that there is unique way to do this.

This shows that every vertex of the subset sum polytope $\SUBSETSUM(A(\Phi),b)$ projects to a vertex of $\SAT(\Phi)$ and every vertex of $\SAT(\Phi)$ can be lifted to a vertex of $\SUBSETSUM(A(\Phi),b),$.
The projection is defined by dropping every coordinate except those corresponding to the numbers $v_i$ in the reduction described above. 
The lifting is defined by the procedure in the proceeding paragraph.
Hence, $\SAT(\Phi)$ is a projection of $\SUBSETSUM(A(\Phi),b).$
\end{proof}

Combining the preceding two theorems we obtain the following.
\begin{corollary}\label{cor:subsetsum_lb}For every natural number $n\geqslant 1,$ there exists an instance $A,b$ of the subset-sum problem with $O(n)$ integers in $A$ such that $\xc(\SUBSETSUM(A,b))\geqslant 2^{\Omega(\sqrt n)}.$
\end{corollary}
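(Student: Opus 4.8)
The plan is to obtain the corollary as a straightforward composition of the two preceding theorems, transferring the lower bound across a projection via Proposition~\ref{prop:ef}. First I would invoke Theorem~\ref{3dmthm}: given $n\geqslant 1$, it furnishes a 3SAT formula $\Phi$ with $N=O(n)$ variables and $M=O(n)$ clauses such that $\xc(\SAT(\Phi))\geqslant 2^{\Omega(\sqrt{n})}$. (Recall that in the proof of that theorem this $\Phi$ is the formula $\Phi_m$ built from $K_m$ with $m=\Theta(\sqrt{n})$, so what one actually gets is $\xc(\SAT(\Phi_m))\geqslant\xc(\CutP(K_m))\geqslant 2^{\Omega(m)}=2^{\Omega(\sqrt{n})}$, and $\Phi_m$ has $O(m^2)=O(n)$ variables and clauses.)

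Next I would feed this $\Phi$ into Theorem~\ref{ssthm}. It produces a set of integers $A:=A(\Phi)$ and an integer $b$ with $\abs{A}=2N+2M=O(n)$ such that $\SAT(\Phi)$ is the projection of $\SUBSETSUM(A,b)$. Applying Proposition~\ref{prop:ef} to this projection then gives
\[
\xc(\SUBSETSUM(A,b))\;\geqslant\;\xc(\SAT(\Phi))\;\geqslant\;2^{\Omega(\sqrt{n})}.
\]
Since $\abs{A}=\Theta(n)$, the parameters $\sqrt{n}$ and $\sqrt{\abs{A}}$ agree up to constant factors inside the $\Omega(\cdot)$, so this is exactly the asserted bound in terms of the number of integers in the instance, and the corollary follows.

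I do not expect a genuine obstacle here, since no new construction is required — everything has already been done in Theorems~\ref{3dmthm} and~\ref{ssthm} and in Proposition~\ref{prop:ef}. The one point that needs a little care is purely bookkeeping: one must check that the two appearances of "$O(n)$" are compatible, i.e.\ that Theorem~\ref{ssthm} only multiplies the variable/clause counts of $\Phi$ by a constant, so that the resulting subset-sum instance still has $O(n)$ integers while the exponent remains $\Omega(\sqrt{n})$. Once that is noted, the proof is a one-line chaining argument.
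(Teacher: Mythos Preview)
Your proposal is correct and matches the paper's approach exactly: the paper simply states that the corollary follows by ``combining the preceding two theorems,'' which is precisely the chaining via Proposition~\ref{prop:ef} that you spell out. Your added bookkeeping remark about the compatibility of the $O(n)$ counts is the only nontrivial thing to verify, and you handle it correctly.
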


As mentioned above, the polytope $\SUBSETSUM(A,b)$ is a face of $\Knapsack(A,b)$ and hence Corollary \ref{cor:subsetsum_lb} implies a superpolynomial lower bound for the Knapsack polytope. We would like to note that a similar bound for the Knapsack polytope was proved recently and independently by Pokutta and van Vyve \cite{PV13}.

\subsection{3d-matching}
Consider a hypergraph $G=([n],E)$, where $E$ contains triples for some $i,j,k\in [n]$ where $i,j,k$ are distinct. A subset $E'\subseteq E$ is said to be a 3-dimensional matching if all the triples in $E'$ are disjoint. The $3d$-matching polytope $\threeDM(G)$ is defined as the convex hull of the characteristic vectors of every $3d$-matching of $G.$ That is,
$$ \threeDM(G):=\conv(\{\chi(E')\mid E'\subseteq E ~~~\text{is a $3d$-matching}\})$$

It is often customary to consider only hypergraphs defined over three disjoint set of vertices $X,Y,Z$ such that the hyperedges are subsets of $X\times Y\times Z.$ Observe that any hypergraph $G$ can be converted into a hypergraph $H$ in such a form by making three copies of the vertex set $V,V',V''$ and using a hyperedge $(i,j',k'')$ in $H$ if and only if $(i,j,k)$ is a hyperedge in $G.$ It is easy to see that $\xc(\threeDM(G))=\Theta(\xc(\threeDM(H))).$

\medskip
The 3d-matching problem asks: given a hypergraph $G$, does there exist a 3d-matching that covers all vertices? This problem is known to be $NP$-complete and was one of Karp's 21 problems proved to be $NP$-complete \cite{GJ,Karp}. Note that this problem can be solved by 
linear optimization over the polytope $\threeDM(G)$ and therefore it is to be expected that $\threeDM(G)$ would not have a polynomial size extended formulation.
 
In this subsection, we show that the 3d-matching polytope has superpolynomial extension complexity in the worst case. We prove this using a standard reduction from 3SAT to 3d-Matching used in the NP-completeness proof for the later problem (See \cite{GJ}). 
The form of this reduction, which is very widely used, employs a gadget for each variable along with a gadget for each clause. 
We omit the exact details for the reduction here because we are only interested in the correctness of the reduction and the variable gadget (See Figure \ref{fig:3d_matching_variable_gadget}).

\begin{figure}[!ht]
  \centering
    \includegraphics[width=0.5\textwidth]{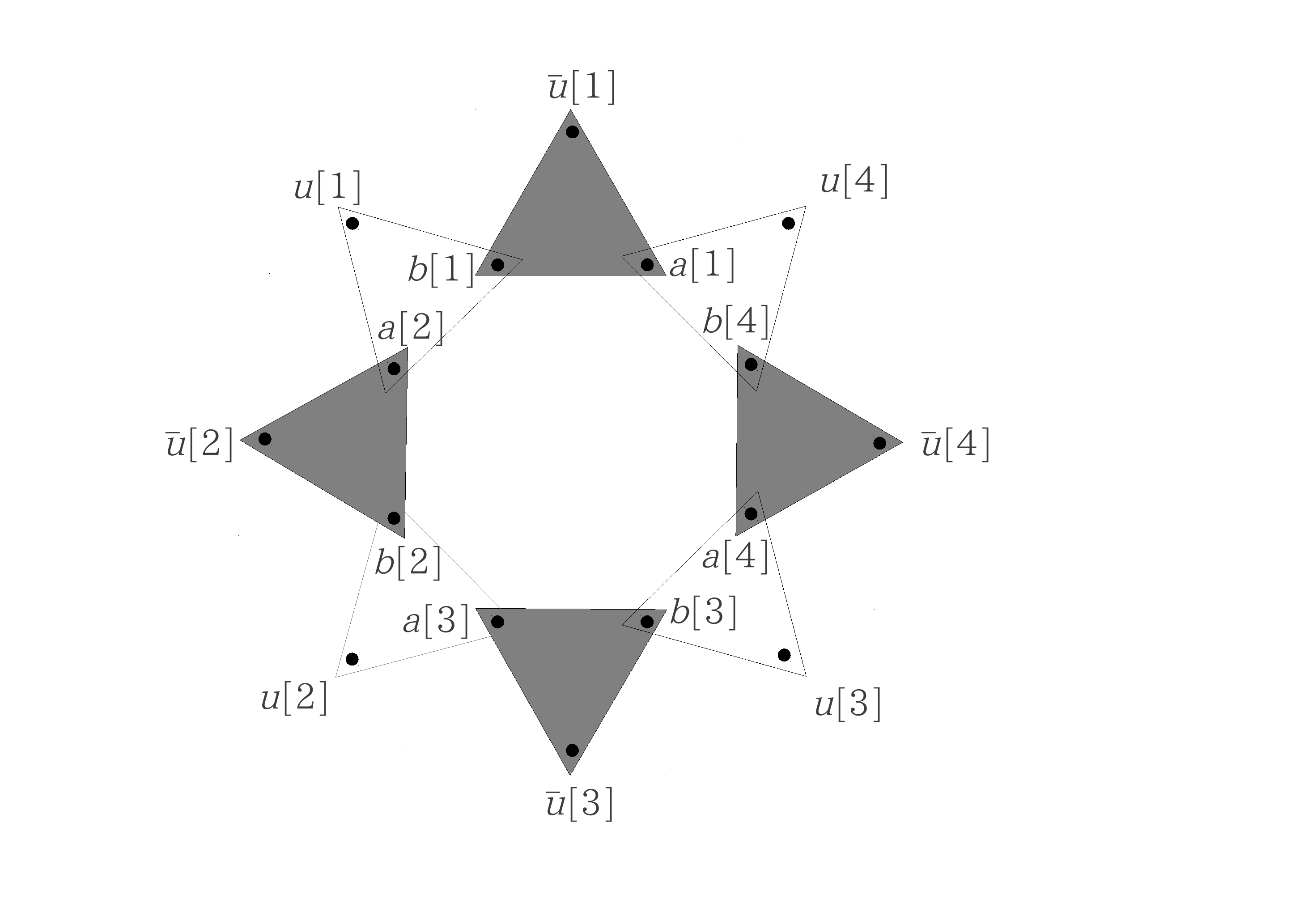}
    \caption{Gadget for a variable .}  
    \label{fig:3d_matching_variable_gadget}
\end{figure}

In the reduction, any 3SAT formula $\Phi$ is converted to an instance of a 3d-matching by creating a set of hyperedges for every variable (See Figure \ref{fig:3d_matching_variable_gadget}) along with some other hyperedges that does not concern us for our result. The crucial property that we require is the following: any satisfiable assignment of $\Phi$ defines some (possibly more than one) 3d-matching. Furthermore, in any maximal
matching either only the light hyperedges or only the dark hyperedges are picked, corresponding to setting the corresponding variable to, say, true or false respectively. Using these facts we can prove the following:

\begin{theorem}
Let $\Phi$ be an instance of 3SAT and let $H$ be the hypergraph obtained by the reduction above. Then $\SAT(\Phi)$ is the projection of a face of $\threeDM(H).$
\end{theorem}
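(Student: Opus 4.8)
The plan is to mimic the structure of the subset-sum proof: exhibit an explicit projection from a face of $\threeDM(H)$ onto $\SAT(\Phi)$, using the variable gadgets to carry the truth assignment and the face constraint to force the matching to be ``maximal'' enough that each gadget is in one of its two canonical states. First I would recall the two structural properties quoted just above the theorem: (a) every satisfying assignment of $\Phi$ extends to at least one $3d$-matching of $H$, and (b) in any maximal matching, each variable gadget uses either exactly the light hyperedges or exactly the dark hyperedges, and this choice encodes the truth value of that variable. The candidate map $\proj$ drops every coordinate of $\RR^E$ except, for each variable $x_i$, one distinguished ``light'' hyperedge $e_i$ of the gadget for $x_i$; then $\proj(\chi(E')) \in \{0,1\}^n$ has $i$-th coordinate $1$ iff the light hyperedges were chosen in gadget $i$, i.e.\ iff $x_i$ is true.

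Next I would identify the face. The relevant face $F$ of $\threeDM(H)$ should be cut out by the valid inequalities that say ``every vertex of $H$ is covered,'' turned into equalities — equivalently, $F$ is the convex hull of the characteristic vectors of the \emph{perfect} $3d$-matchings of $H$. I would verify this is genuinely a face: each constraint $\sum_{e \ni v} x_e \le 1$ is valid for $\threeDM(H)$, and the perfect matchings are exactly the vertices attaining all of these with equality simultaneously (covering is the only way to be tight everywhere, given the gadget construction forces a perfect matching to exist precisely when $\Phi$ is satisfiable). So $F = \threeDM(H) \cap \{x : \sum_{e\ni v} x_e = 1 \ \forall v\}$ is a face, and its vertices are the characteristic vectors of perfect matchings.

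Then the two directions of the projection argument. For one direction: every perfect matching is in particular maximal, so by property (b) each gadget is in the light or dark state, hence $\proj(\chi(E'))$ is a well-defined $0/1$ vector; I would then argue that the clause gadgets, together with perfectness, force this vector to satisfy every clause — this is exactly the correctness of the standard $3d$-matching reduction (a perfect matching exists iff the induced assignment satisfies all clauses), so $\proj(\chi(E')) \in \SAT(\Phi)$. For the other direction: given a satisfying assignment, invoke property (a) to get a $3d$-matching, extend/complete it to a \emph{perfect} matching (the clause and garbage-collection gadgets of the standard reduction are designed so that this completion exists precisely when all clauses are satisfied), and check that $\proj$ of its characteristic vector returns the original assignment. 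Since $\proj$ is linear and maps the vertex set of $F$ onto the vertex set of $\SAT(\Phi)$, it maps $F = \conv(\text{its vertices})$ onto $\SAT(\Phi) = \conv(\text{its vertices})$, which is the claim.

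The main obstacle is that, unlike the subset-sum case, the paper deliberately suppresses the clause and connecting gadgets, so the ``completion to a perfect matching'' step and the ``perfect matching $\Rightarrow$ all clauses satisfied'' step cannot be checked by a self-contained calculation here; they must be imported as the known correctness of the textbook reduction (\cite{GJ}). Care is needed on two points: first, that the projection really lands in $\{0,1\}^n$ rather than producing fractional first coordinates — this is where property (b) about \emph{maximal} matchings, combined with the fact that vertices of $F$ are integral, does the work; second, that $\proj$ is surjective onto the vertices of $\SAT(\Phi)$ and not merely that images of matchings are satisfying assignments — i.e.\ we need both inclusions between the vertex sets, after which linearity of $\proj$ upgrades the vertex-level bijection to the polytope-level statement. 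Combined with the 3SAT theorem above and Proposition~\ref{prop:ef} and Proposition~\ref{prop:face}, this yields a $2^{\Omega(\sqrt{n})}$ extension-complexity lower bound for the $3d$-matching polytope, which is presumably recorded as the following corollary.
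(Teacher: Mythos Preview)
Your argument is correct, and it differs from the paper's in the choice of face. The paper defines $F$ via the variable-gadget inequalities $\sum_{i=1}^{2k(x)} y_i \le k(x)$ (one per variable $x$, where $2k(x)$ is the number of hyperedges in that gadget) set to equality; you instead take the face cut out by $\sum_{e\ni v} x_e = 1$ for every vertex $v$ of $H$, i.e.\ the face of perfect matchings. Both choices force each variable gadget into its all-light or all-dark state, so the projection onto one designated light hyperedge per variable lands in $\{0,1\}^n$. The crucial difference is that your face also forces the clause-gadget core vertices to be covered, and by the correctness of the textbook reduction this is precisely what guarantees the projected assignment satisfies $\Phi$. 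The paper's face places no constraint on the clause or garbage-collection hyperedges: a matching that saturates every variable gadget but uses no other hyperedges lies in the paper's $F$ and can project to an arbitrary assignment, satisfying or not, so the sentence ``projecting \ldots\ gives a valid satisfying assignment for $\Phi$'' is not justified as written. Your perfect-matching face avoids this by reducing both directions of the vertex correspondence directly to the known equivalence between perfect $3d$-matchings of $H$ and satisfying assignments of $\Phi$; the price is that you must import more of the reduction's correctness (clause and garbage gadgets included), which you rightly flag as the main obstacle.
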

\begin{proof}
Let the number of hyperedges in the gadget corresponding to a variable $x$ be $2k(x).$ Then, the number of hyperedges picked among these hyperedges in any matching in $H$ is at most $k(x).$ Therefore, if $y_1,\ldots,y_{2k(x)}$ denote the variables corresponding to these hyperedges in the polytope $\threeDM(H)$ then $\sum_{i=1}^{2k(x)}y_i \leqslant k(x)$ is a valid inequality for $\threeDM(H).$ Consider the face $F$ of $\threeDM(H)$ obtained by adding the equality $\sum_{i=1}^{2k(x)}y_i = k(x)$ corresponding to each variable $x$ appearing in $\Phi.$

Any vertex of $\threeDM(H)$ lying in $F$ selects either all light hyperedges or all dark hyperedges. Therefore, projecting out all variables except one variable $y_i$ corresponding to any fixed (arbitrarily chosen) light hyperedge for each variable in $\Phi$ gives a valid satisfying assignment for $\Phi$ and thus a vertex of $\SAT(\Phi).$ Alternatively, any vertex of $\SAT(\Phi)$ can be extended to a vertex of $\threeDM(H)$ lying in $F$ easily.

Therefore, $\SAT(\Phi)$ is the projection of $F.$
\end{proof}

The number of vertices in $H$ is $O(nm)$ where $n$ is the number of variables and $m$ the number of clauses in $\Phi.$ Considering only the 3SAT formulae with high extension complexity from subsection \ref{subsec:3sat}, we have $m=O(n).$ Therefore, considering only the hypergraphs 
arising from such 3SAT formulae and using propositions 1 and 2, we have that 

\begin{corollary}For every natural number $n\geqslant 1,$ there exists a hypergraph $H$ with $O(n)$ vertices such that $\xc(\threeDM(H))\geqslant 2^{\Omega(n^{1/4})}.$
\end{corollary}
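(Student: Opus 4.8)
The plan is to compose the two preceding theorems with Propositions~\ref{prop:ef} and~\ref{prop:face}; essentially all of the content lies in tracking how the size parameters propagate down to the exponent $n^{1/4}$. First I would fix a hard 3SAT instance coming from Section~\ref{subsec:3sat}: by Theorem~\ref{3dmthm} there is a formula $\Phi$ with $v$ variables and $O(v)$ clauses such that $\xc(\SAT(\Phi)) \geqslant 2^{\Omega(\sqrt v)}$. It matters here that the clause count is genuinely linear in $v$ --- in the proof of Theorem~\ref{3dmthm} the formula $\Phi_m$ has $\Theta(m^2)$ variables \emph{and} $\Theta(m^2)$ clauses, so this is the case --- and it is precisely this that keeps the 3d-matching instance small.

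Next I would run the 3d-matching reduction just proved on $\Phi$. It produces a hypergraph $H = H(\Phi)$ with $O\bigl(v \cdot (\#\text{clauses})\bigr) = O(v^2)$ vertices, together with a face $F$ of $\threeDM(H)$ such that $\SAT(\Phi)$ is a projection of $F$. Proposition~\ref{prop:face} gives $\xc(F) \leqslant \xc(\threeDM(H))$, and Proposition~\ref{prop:ef} gives $\xc(\SAT(\Phi)) \leqslant \xc(F)$; chaining the two inequalities yields $\xc(\threeDM(H)) \geqslant \xc(\SAT(\Phi)) \geqslant 2^{\Omega(\sqrt v)}$.

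Finally I would re-express the bound in terms of the vertex count of $H$. Letting $n$ be the number of vertices of $H$, we have $n = \Theta(v^2)$, hence $v = \Theta(\sqrt n)$ and $\xc(\threeDM(H)) \geqslant 2^{\Omega(\sqrt{\sqrt n})} = 2^{\Omega(n^{1/4})}$; for values of $n$ not of the exact form produced by the construction, one pads $H$ with isolated vertices, which cannot decrease $\xc(\threeDM(H))$, and uses that the available values of $n$ grow by a bounded ratio. The only genuine ``obstacle'' is bookkeeping: one must confirm that the clause count of the hard formula is $O(v)$ rather than $\Theta(v^2)$ (otherwise $H$ would have $O(v^3)$ vertices and we would only obtain a $2^{\Omega(n^{1/6})}$ bound), that the two ``lose a square root'' steps --- variables $\to$ exponent in Theorem~\ref{3dmthm}, and vertices $\to$ variables in the reduction --- compose to a fourth root and not an eighth root, and that the inequality directions in Propositions~\ref{prop:ef} and~\ref{prop:face} indeed turn ``$\SAT(\Phi)$ is a projection of a face of $\threeDM(H)$'' into a \emph{lower} bound on $\xc(\threeDM(H))$.
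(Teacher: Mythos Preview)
Your proposal is correct and follows essentially the same route as the paper: take the hard 3SAT formula from Section~\ref{subsec:3sat} (with a linear number of clauses), feed it through the 3d-matching reduction to get $H$ with $O(v^2)$ vertices, apply Propositions~\ref{prop:ef} and~\ref{prop:face}, and reparametrize to turn $2^{\Omega(\sqrt v)}$ into $2^{\Omega(n^{1/4})}$. Your bookkeeping remarks (linearity of the clause count, composition of the two square-root losses, padding for intermediate $n$) are exactly the points the paper leaves implicit.
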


%
%

\subsection{Stable set for cubic planar graphs}
Now we show that $\STAB(G)$ can have superpolynomial extension complexity even when $G$ is a cubic planar graph. Our starting point is the following result proved by Fiorini et. al. \cite{FMPTW}.

\begin{theorem}[{\cite{FMPTW}}]\label{thm:xc_stab}
For every natural number $n\geqslant 1$ there exists a graph $G$ such that $G$ has $O(n)$ vertices and $O(n)$ edges, and $\xc(\STAB(G))\geqslant 2^{\Omega(\sqrt n)}.$
\end{theorem}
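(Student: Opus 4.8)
The plan is to derive the bound from the Lower Bound Theorem (Theorem~\ref{LBT}) by realizing a cut polytope of linear dimension as a projection of a face of a stable set polytope of a graph that is only quadratically larger --- exactly the embedding route sketched in Section~\ref{method} for $\TSP$ and $\STAB$. Fix a parameter $\ell$. I would construct a graph $G=G(\ell)$ with $O(\ell^2)$ vertices and $O(\ell^2)$ edges, together with a face $F$ of $\STAB(G)$, such that $F$ projects onto $\CutP_{\ell+1}$. Granting this, Proposition~\ref{prop:face} gives $\xc(F)\leqslant\xc(\STAB(G))$ and Proposition~\ref{prop:ef} gives $\xc(\CutP_{\ell+1})\leqslant\xc(F)$, so Theorem~\ref{LBT} yields $\xc(\STAB(G))\geqslant\xc(\CutP_{\ell+1})\geqslant 2^{\Omega(\ell)}$. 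Setting $n:=|V(G)|=\Theta(\ell^2)$, the graph has $O(n)$ vertices and $O(n)$ edges and $\xc(\STAB(G))\geqslant 2^{\Omega(\sqrt n)}$, which is the claim.

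For the construction I would use one small gadget per coordinate of a cut, mirroring the identification of a cut of $K_{\ell+1}$ with a Boolean vector $b\in\{0,1\}^{\ell}$: take $S\subseteq\{1,\dots,\ell\}$ with $0\notin S$ and set $b_v=1$ iff $v\in S$, so that the $0v$-entry of the cut vector equals $b_v$ and the $uv$-entry equals $b_u\oplus b_v$, and as $S$ ranges over all such subsets the cut vectors run over all of $\CutP_{\ell+1}$ (since $\vct{\delta}(S)=\vct{\delta}(\bar S)$). For each $v\in[\ell]$ add two vertices $a_v,\bar a_v$ joined by an edge; on $F$ this edge will be tight, i.e.\ $x_{a_v}+x_{\bar a_v}=1$, so $b_v:=x_{a_v}$ ranges freely over $\{0,1\}$. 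For each pair $1\leqslant u<v\leqslant\ell$ add four vertices $e^{uv}_{00},e^{uv}_{01},e^{uv}_{10},e^{uv}_{11}$ forming a clique $K_4$, and join $e^{uv}_{\alpha\beta}$ to $a_u$ when $\alpha=0$ and to $\bar a_u$ when $\alpha=1$, and likewise to $a_v$ or $\bar a_v$ according to $\beta$. Let $F$ be the face of $\STAB(G)$ on which every edge $a_v\bar a_v$ and every clique $\{e^{uv}_{\alpha\beta}\}$ is tight; $F$ is genuinely a face because the linear functional cutting it out is a nonnegative combination of the associated valid edge- and clique-inequalities. A short case analysis then shows that at a vertex of $F$ the state vertex chosen in the gadget of the pair $(u,v)$ is \emph{forced} to be $e^{uv}_{b_u b_v}$, and conversely every $b\in\{0,1\}^{\ell}$ extends uniquely to such a vertex; hence the image of $F$ under the linear map $x\mapsto\bigl((x_{a_v})_{v\in[\ell]},\ (x_{e^{uv}_{01}}+x_{e^{uv}_{10}})_{1\leqslant u<v\leqslant\ell}\bigr)$ is exactly $\conv\{\,(b_v)_v,(b_u\oplus b_v)_{u<v}:b\in\{0,1\}^{\ell}\,\}=\CutP_{\ell+1}$. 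Each gadget has a bounded number of vertices and edges, so $|V(G)|=O(\ell^2)$ and $|E(G)|=O(\ell^2)$, as required; one could equally phrase everything through $\CorP_{\ell}$ and the covariance map.

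The step that needs real care is the gadget analysis: I must check that the edges running from the $K_4$-gadget into $\{a_u,\bar a_u,a_v,\bar a_v\}$ leave \emph{exactly one} state vertex $e^{uv}_{\alpha\beta}$ available for each Boolean choice $(b_u,b_v)$, so that the corresponding entry of the cut vector is genuinely forced at every vertex of $F$ and not merely bounded --- otherwise $F$ could have a vertex whose image falls outside $\CutP_{\ell+1}$ and the chain of inequalities above would break --- and that the functional defining $F$ is valid for all of $\STAB(G)$, so that Proposition~\ref{prop:face} indeed applies. Everything else is already available: the monotonicity of $\xc$ under faces and projections (Propositions~\ref{prop:ef} and~\ref{prop:face}) and the $2^{\Omega(\ell)}$ bound for $\CutP_{\ell+1}$ (Theorem~\ref{LBT}). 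A conceptually simpler alternative would be to compose the fact from Section~\ref{subsec:3sat} that $\CutP(K_m)$ is a projection of $\SAT(\Phi_m)$ with the textbook reduction from 3SAT to maximum stable set (a triangle per clause, plus edges between complementary literals, together with a small variable-selector gadget), which exhibits $\SAT(\Phi_m)$ as a projection of a face of the stable set polytope of a graph on $O(m^2)$ vertices. The catch there is the number of edges: for our $\Phi_m$ the diagonal variables $x_{ii}$ occur in $\Theta(m)$ clauses each, so the complementary-literal edges number $\Theta(m^3)$, which spoils the $|E(G)|=O(|V(G)|)$ bound. This is why I would prefer the direct gadget construction above.
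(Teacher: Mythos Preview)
The paper does not give its own proof of Theorem~\ref{thm:xc_stab}; it simply quotes the result from~\cite{FMPTW} and uses it as a black box in Section~3.4. So there is nothing in this paper to compare your argument against, but your proposal does follow exactly the route the paper \emph{attributes} to~\cite{FMPTW} in Section~\ref{method}: embed $\CutP_{\ell+1}$ (equivalently $\CorP_\ell$) as the linear image of a face of $\STAB(G)$ for a graph $G$ with $O(\ell^2)$ vertices and edges, then invoke Theorem~\ref{LBT} together with Propositions~\ref{prop:ef} and~\ref{prop:face}.

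Your gadget construction is correct. For each Boolean choice $(b_u,b_v)$ the edges from the $K_4$ into $\{a_u,\bar a_u,a_v,\bar a_v\}$ kill exactly three of the four state vertices, so on the face $F$ the surviving state vertex is forced to be $e^{uv}_{b_ub_v}$, and hence $x_{e^{uv}_{01}}+x_{e^{uv}_{10}}=b_u\oplus b_v$ as claimed; the functional cutting out $F$ is a nonnegative combination of edge and clique inequalities, so it is valid for all of $\STAB(G)$ and $F$ is genuinely a face. The vertex and edge counts are $2\ell+4\binom{\ell}{2}$ and $\ell+14\binom{\ell}{2}$, both $O(\ell^2)$. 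One cosmetic point: your map to $\CutP_{\ell+1}$ is a general linear map (because of the sums $x_{e^{uv}_{01}}+x_{e^{uv}_{10}}$) rather than a coordinate projection, but that is precisely what the paper's definition of ``extension'' permits, so Proposition~\ref{prop:ef} still applies. Your closing observation that the alternative 3SAT route via $\Phi_m$ blows up the edge count to $\Theta(m^3)$ is also correct and is a good reason to prefer the direct gadget construction.
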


We start with this graph and convert it into a cubic planar graph $G'$ with $O(n^2)$ vertices and extension complexity at least $2^{\Omega(\sqrt n)}.$

\subsubsection{Making a graph planar}
For making any graph $G$ planar without reducing the extension complexity of the associated stable set polytope, we use the same gadget used by Garey, Johnson and Stockmeyer \cite{GJS} in the proof of NP-completeness of finding maximum stable set in planar graph. Start with any planar drawing of $G$ and replace every crossing with the gadget $H$ with 22 vertices shown in Figure \ref{fig:planar_gadget} to obtain a graph $G'$. The following theorem shows that $\STAB(G)$ is the projection of a face of $\STAB(G').$

\begin{figure}[h!]
  \centering
    \includegraphics[width=0.5\textwidth]{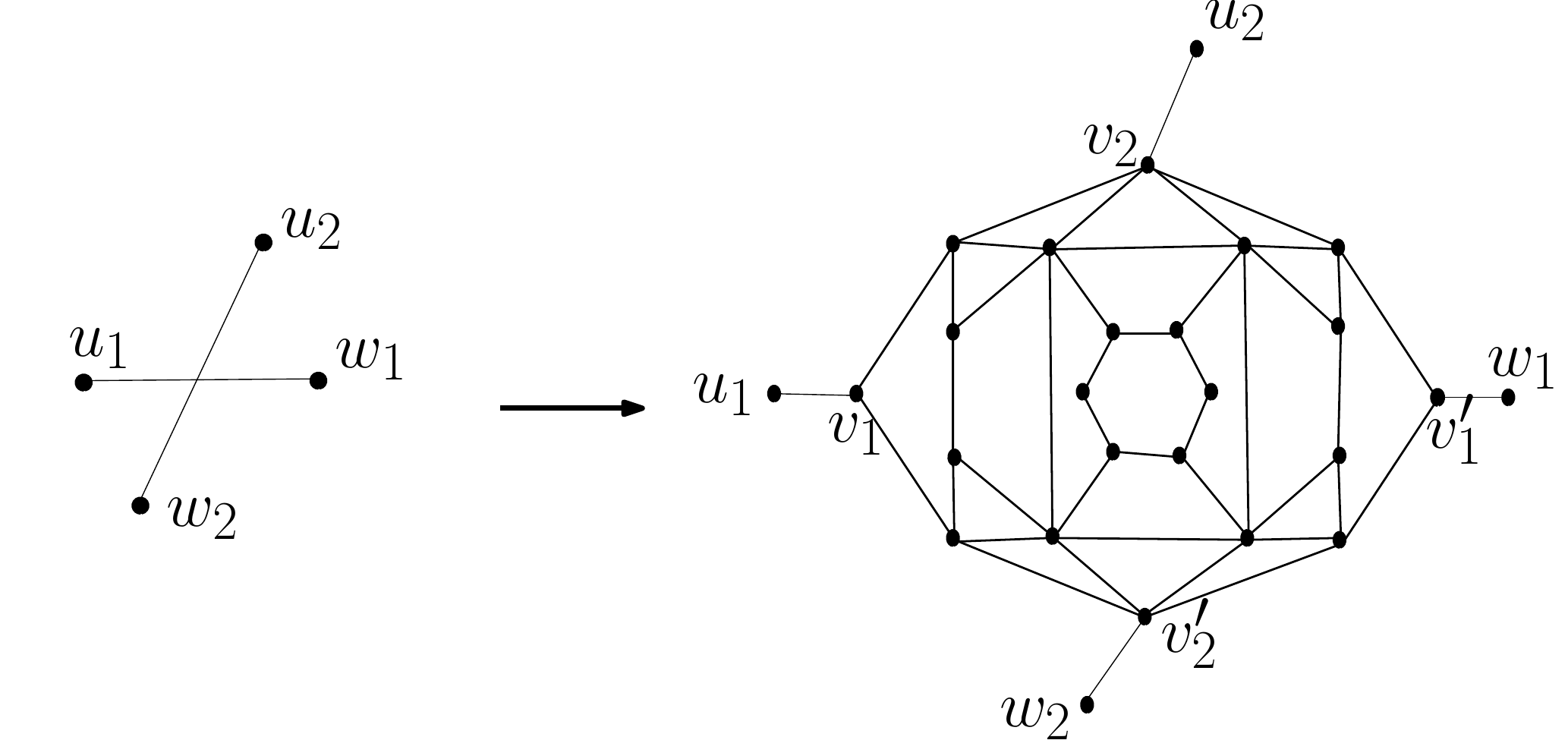}
    \caption{Gadget to remove a crossing.}  
    \label{fig:planar_gadget}
\end{figure}

\begin{theorem} \label{thm:making_a_graph_planar} Let $G$ be a graph and let $G'$ be obtained from a planar embedding of $G$ by replacing every edge intersection with a gadget shown in Figure \ref{fig:planar_gadget}. Then, $\STAB(G)$ is the projection of a face of $\STAB(G').$ 
\end{theorem}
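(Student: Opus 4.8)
The plan is to exhibit an explicit face $F$ of $\STAB(G')$ and an explicit coordinate projection $\pi\colon\RR^{V(G')}\to\RR^{V(G)}$ for which $\pi(F)=\STAB(G)$; together with Propositions \ref{prop:ef} and \ref{prop:face} this also yields $\xc(\STAB(G))\leqslant\xc(F)\leqslant\xc(\STAB(G'))$. The argument is really a polytopal repackaging of the Garey--Johnson--Stockmeyer analysis, so the first thing I would do is isolate the one fact we import from \cite{GJS}. Let $N:=V(G')\setminus V(G)$ be the set of vertices created when replacing crossings by copies of the gadget $H$ (together with any subdivision vertices inserted in a standard preprocessing step so that the gadgets strung along a single edge compose correctly). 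Then there is a constant $C$, depending only on the number of crossings and the fixed size of $H$, such that: (i) the inequality $\sum_{v\in N}x_v\leqslant C$ is valid for $\STAB(G')$; and (ii) for every $S\subseteq V(G)$, the set $S$ is stable in $G$ if and only if there is a stable set $S'$ of $G'$ with $S'\cap V(G)=S$ and $\lvert S'\cap N\rvert=C$. Property (ii) is essentially the statement whose ``only if'' direction powers the NP-hardness reduction of \cite{GJS}: one fixes a stable set $S$ of $G$ and fills in, crossing by crossing, the interior of each copy of $H$ up to its local budget, using the gadget's transmission behaviour to keep the two wires through that crossing consistent with $S$; the ``if'' direction runs the same bookkeeping in reverse.

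Granting (i)--(ii), set $F:=\{x\in\STAB(G')\mid\sum_{v\in N}x_v=C\}$. By (i) this is a face of $\STAB(G')$, and it is nonempty since by (ii) the empty stable set of $G$ lifts to a point of $F$. Since stable set polytopes are $0/1$-polytopes, so is the face $F$, and $\vertexset(F)$ consists precisely of the incidence vectors $\chi(S')$ of the stable sets $S'$ of $G'$ with $\lvert S'\cap N\rvert=C$. Let $\pi$ be the linear map that deletes the coordinates indexed by $N$.

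It then remains to verify $\pi(F)=\STAB(G)$. Since $\pi$ is linear and $F=\conv(\vertexset(F))$, the image $\pi(F)$ is the convex hull of the vectors $\chi(S'\cap V(G))$ taken over all stable sets $S'$ of $G'$ with $\lvert S'\cap N\rvert=C$. By property (ii) these are exactly the vectors $\chi(S)$ over all stable sets $S$ of $G$, so $\pi(F)=\conv\{\chi(S)\mid S\text{ stable in }G\}=\STAB(G)$, which is the theorem.

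The main obstacle is, unsurprisingly, property (ii), and two points there need care. First, one must check that forcing $\sum_{v\in N}x_v=C$ \emph{globally} forces each copy of $H$ to be filled to its \emph{local} budget --- this is where one uses that distinct copies of $H$ share only the (uncounted) attachment vertices, so that the maximum of $\sum_{v\in N}x_v$ over stable sets of $G'$ is the sum of the per-copy and per-subdivision maxima and is attained only when each is attained. Second, one must check that the per-crossing transmission property of $H$ composes cleanly along an edge of $G$ that is routed through many crossings, so that ``budget met in every copy'' really does translate into ``the original edge of $G$ is respected by $S'\cap V(G)$.'' Both facts are precisely the content of the gadget lemma of \cite{GJS}; no new combinatorics is required here, only its restatement in the language of faces and projections of stable set polytopes.
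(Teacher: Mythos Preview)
Your argument is correct and follows the same strategy as the paper: exhibit a face of $\STAB(G')$ on which every stable set restricts to a stable set of $G$, and conversely every stable set of $G$ lifts to a vertex of that face, then project. The only real difference is in how the face is cut out. The paper imposes one equality \emph{per gadget}, namely $\sum_{j\in V_{H_i}}x_j=9$ for each crossing gadget $H_i$ (the constant $9$ being read off the Garey--Johnson--Stockmeyer table), and intersects these to get $F$. You instead use a single global equality $\sum_{v\in N}x_v=C$ and then argue that, because distinct gadgets are vertex-disjoint inside $N$, meeting the global budget forces every local budget to be met. Both routes land on the same face; the paper's per-gadget formulation avoids the decomposition argument, while yours makes the ``composition along an edge through many crossings'' issue explicit---a point the paper handles only implicitly. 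Your introduction of subdivision vertices is unnecessary in the paper's setup, but harmless.
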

\begin{proof}
Let $H_1,\ldots,H_k$ be the gadgets introduced in $G$ to obtain $G'.$ Any stable set $S$ of $G'$ contains some, or possibly no,
vertices from the gadgets introduced. For any gadget $H \in \{H_1,\ldots,H_k\},$ let $V_{H}$ denote the set of vertices of $H.$ Then, $S\cap V_{H}$ is a stable set for $H.$ Denote by $s_{ij}$ the size of maximum independent set in $H$ containing exactly $i$ vertices out of $\{v_1,v'_1\}$ and exactly $j$ vertices out of $\{v_2,v_2'\}.$ Table \ref{tab:gjs} lists the values of $s_{ij}$ for $i,j\in\{1,2\}.$ The table is essentially Table 1 from \cite{GJS} but their table lists the size of the minimum vertex cover and so we subtract the entries from the number of nodes in the gadget which is $22.$
\begin{table}[!ht]
\caption{Values of $s_{ij}$}\label{tab:gjs}
\begin{center}
\begin{tabular}{|c|c|c|c|}
\hline
$i\backslash j$ & $2$ & $1$ & $0$ \\ \hline \cline{2-4}
$2$ & $9$ & $8$& $7$ \\ \hline
$1$ & $9$ & $9$ & $8$ \\ \hline
$0$ & $8$ & $8$ & $7$ \\ \hline
\end{tabular}
\end{center}
\end{table}

As we see, every stable set of $H$ has fewer than $9$ vertices and hence $\sum_{i\in V_H} x_i \leqslant 9$ is a valid inequality for $\STAB(G').$ Consider the face $$F:= \STAB(G')\bigcap_{i=1}^k\{x\mid\sum_{j\in V_{H_i}}x_j=9\}$$

Consider any stable set $S$ of $G'$ lying in the face $F.$ It is clear that at least one vertex must be picked in $S$ out of each $\{v_1,v'_1\}$ and $\{v_2,v'_2\}.$ Therefore, for any edge $(u,v)$ in $G$ it is not possible that both $u,v$ are in $S$ and hence projecting out the vertices from the gadgets we get a valid stable set for $G.$ Alternatively, any independent set from $G_n$ can be extended to a stable set in $G'$ by selecting the appropriate maximum stable set from each of the gadgets. Therefore, $\STAB(G)$ is a projection of $F.$
\end{proof}

Since for any graph $G$ with $O(n)$ edges, the number of gadgets introduced $k \leqslant O(n^2),$  we have that the graph $G'$ in the above theorem has at most $O(n^2)$ vertices and edges. Therefore we have a planar graph $G'$ with at most $O(n^2)$ vertices and $O(n^2)$ edges.  This together with Theorem \ref{thm:xc_stab}, Theorem \ref{thm:making_a_graph_planar} and propositions 1 and 2 yields the following corollary.

\begin{corollary}\label{cor:stab_planar}
For every $n$ there exists a planar graph $G$ with $O(n^2)$ vertices and $O(n^2)$ edges such that $\xc(\STAB(G))\geqslant 2^{\Omega(\sqrt n)}.$
\end{corollary}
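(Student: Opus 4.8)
The plan is to assemble the three ingredients already in place: the graph of Theorem~\ref{thm:xc_stab}, the planarising gadget of Theorem~\ref{thm:making_a_graph_planar}, and Propositions~\ref{prop:ef} and~\ref{prop:face}. First I would take, for a given $n$, the graph $G$ furnished by Theorem~\ref{thm:xc_stab}: it has $O(n)$ vertices, $O(n)$ edges, and $\xc(\STAB(G)) \geqslant 2^{\Omega(\sqrt n)}$. Then fix a drawing of $G$ in the plane in general position, meaning that no vertex lies on a non-incident edge, every crossing is a transversal intersection of exactly two edges, and no two crossings coincide; a straight-line drawing after a generic perturbation of the vertex coordinates has all of these properties. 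Let $k$ be the number of crossings in this drawing.

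Next I would form $G'$ by replacing each of the $k$ crossings with a copy of the $22$-vertex gadget of Figure~\ref{fig:planar_gadget}, exactly as in the statement of Theorem~\ref{thm:making_a_graph_planar}: when an edge participates in several crossings it is thereby subdivided into a path passing through the corresponding gadgets, but this causes no difficulty because the gadgets are pairwise vertex-disjoint and each interacts with the rest of $G'$ only through its four terminal vertices in $\{v_1,v_1'\}$ and $\{v_2,v_2'\}$. By construction $G'$ is planar. Theorem~\ref{thm:making_a_graph_planar} then gives that $\STAB(G)$ is the projection of a face $F$ of $\STAB(G')$. Applying Proposition~\ref{prop:face} to $F \subseteq \STAB(G')$ and Proposition~\ref{prop:ef} to the projection $\STAB(G)$ of $F$, we obtain
\[
\xc(\STAB(G)) \;\leqslant\; \xc(F) \;\leqslant\; \xc(\STAB(G')),
\]
so $\xc(\STAB(G')) \geqslant \xc(\STAB(G)) \geqslant 2^{\Omega(\sqrt n)}$, and it only remains to bound the size of $G'$.

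For the size count: since each pair of edges of $G$ crosses at most once in a general-position (e.g.\ straight-line) drawing, $k \leqslant \binom{|E(G)|}{2} = O(n^2)$. Passing from $G$ to $G'$ removes the $k$ crossing points and, per crossing, adds the $22$ vertices and constantly many edges of one gadget, together with a bounded number of subdivision vertices on the crossed edges. Hence $|V(G')| = |V(G)| + O(1)\cdot k = O(n) + O(n^2) = O(n^2)$ and likewise $|E(G')| = O(n^2)$. Renaming $G'$ as $G$ yields the stated planar graph on $O(n^2)$ vertices and $O(n^2)$ edges with $\xc(\STAB(G)) \geqslant 2^{\Omega(\sqrt n)}$.

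There is essentially no hard step here: the corollary is a bookkeeping composition of results established above. The only points that require a little care are (i) choosing the initial drawing of $G$ so that the gadget replacement of Theorem~\ref{thm:making_a_graph_planar} applies cleanly at every crossing, which is handled by the general-position assumption, and (ii) confirming that performing all $k$ replacements simultaneously still yields ``$\STAB(G)$ is the projection of a face of $\STAB(G')$'', which follows because the face $F$ is cut out by one linear equation $\sum_{j \in V_{H_i}} x_j = 9$ per gadget, and these equations, living on disjoint vertex sets, are mutually compatible — exactly the situation analysed in the proof of Theorem~\ref{thm:making_a_graph_planar}.
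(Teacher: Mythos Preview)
Your proposal is correct and follows essentially the same approach as the paper: start from the graph of Theorem~\ref{thm:xc_stab}, planarise via Theorem~\ref{thm:making_a_graph_planar}, bound the number of crossings by $O(n^2)$ to control the size of $G'$, and conclude via Propositions~\ref{prop:ef} and~\ref{prop:face}. The paper's justification is the short paragraph immediately preceding the corollary, and your write-up simply fills in details (general-position drawing, compatibility of the gadget equations) that the paper leaves implicit.
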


\subsubsection{Making a graph cubic} 
Suppose we have a graph $G$ and we transform it into another graph $G'$ by performing one of the following operations:
\begin{enumerate}
\item[]\textbf{ReduceDegree:} Replace a vertex $v$ of $G$ of degree $\delta\geqslant 4$ with a cycle $C_v=(v_1,v'_1,\ldots,v_{\delta},v'_\delta)$ of length $2\delta$ and connect the neighbours of $v$ to alternating vertices $(v_1,v_2,\ldots,v_\delta)$ of the cycle. (See Figure \ref{fig:reduce_degree}) 
\item[]\textbf{RemoveBridge:} Replace any degree two vertex $v$ in $G$ by a four cycle $v_1,v_2,v_3,v_4.$ Let $u$ and $w$ be the neighbours of $v$ in $G.$ Then, add the edges $(u,v_1)$ and $(v_3,w).$ Also add the edge $(v_2,v_4)$ in the graph. (See Figure \ref{fig:remove_bridge})
\item[]\textbf{RemoveTerminal:} Replace any vertex with degree either two or three with a triangle. In case of degree one, attach any one vertex of the triangle to the erstwhile neighbour. 
\end{enumerate}

\begin{figure}[h!]
  \centering
  \begin{subfigure}[b]{0.45\textwidth}
                \centering
                \includegraphics[width=\textwidth]{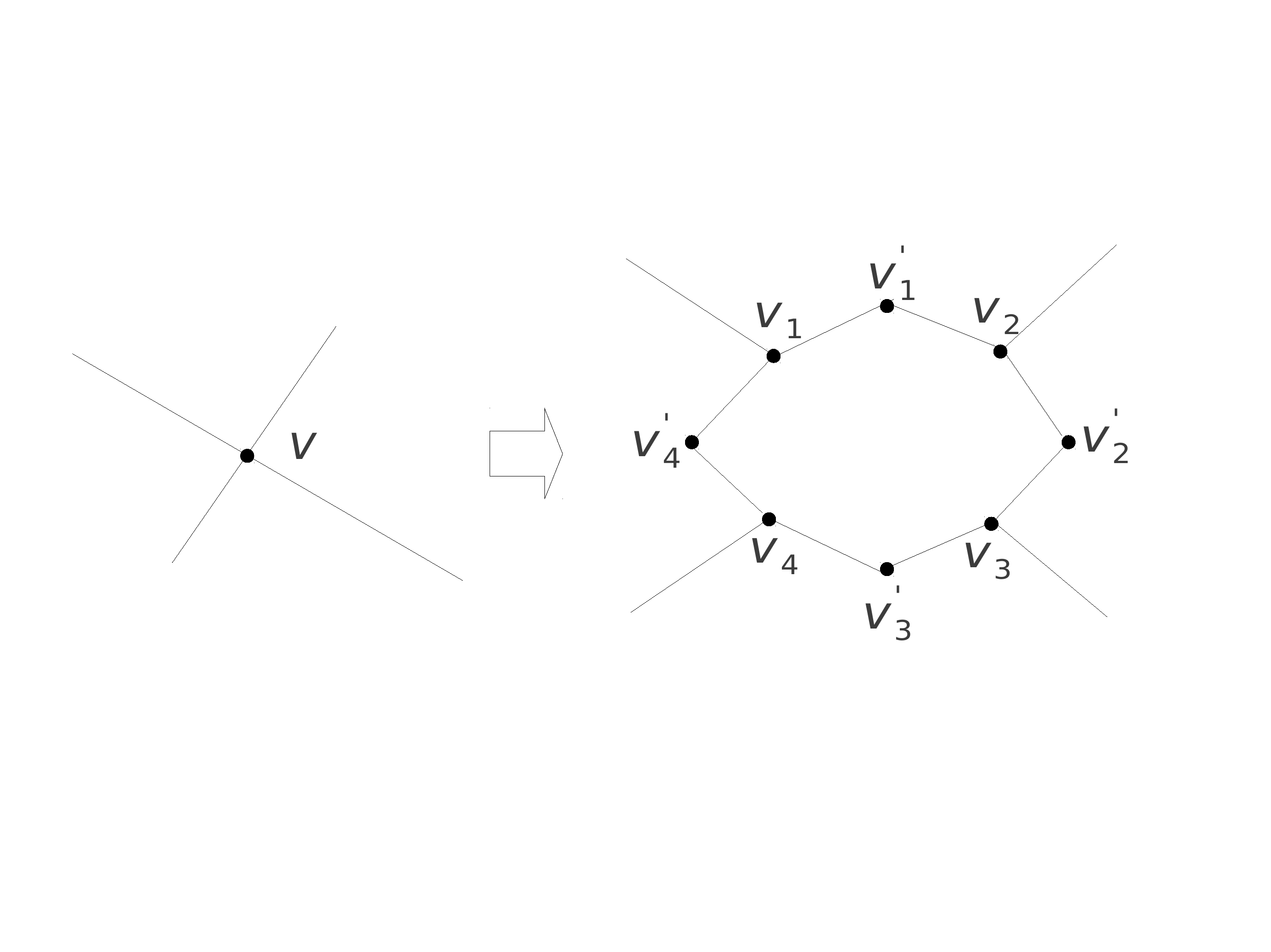}
                \caption{Replace a degree $4$ vertex.} 
                \label{fig:reduce_degree}
        \end{subfigure}
  \begin{subfigure}[b]{0.45\textwidth}
                \centering
                \includegraphics[width=\textwidth]{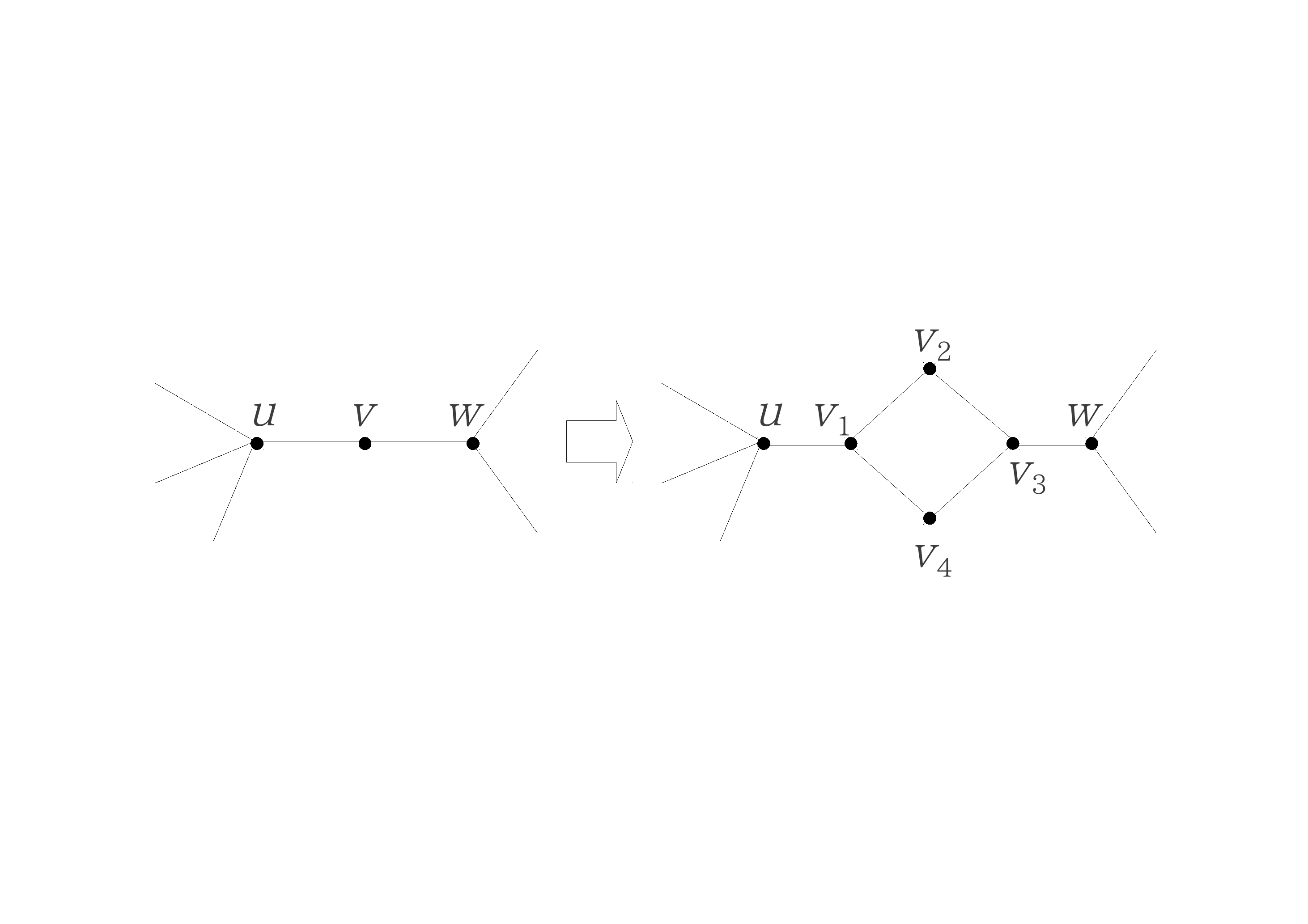}
                \caption{Remove a degree two vertex.}
                \label{fig:remove_bridge}
        \end{subfigure}
\caption{Gadgets}
\end{figure}

\begin{theorem} \label{thm:make_cubic}Let $G$ be any graph and let $G'$ be obtained by performing any number of operation ReduceDegree, RemoveBridge, or RemoveTerminal described above on $G$. Then $\STAB(G)$ is the projection of a face of $\STAB(G').$
\end{theorem}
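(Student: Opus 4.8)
The plan is to prove the statement for a single application of one of the three operations and then compose. For the composition step, first note: \emph{if $Q$ is the projection of a face of $P$ and $P$ is the projection of a face of $R$, then $Q$ is the projection of a face of $R$}. Indeed, writing $P=\pi(F)$ with $F$ a face of $R$ and $\pi$ linear, and $Q=\sigma(P\cap\{a^\trans x=\beta\})$ with $a^\trans x\le\beta$ valid on $P$, the set $F':=\{\,y\in F: a^\trans\pi(y)=\beta\,\}$ is where the inequality $a^\trans\pi(y)\le\beta$ --- valid on $F$ since $\pi(y)\in P$ --- is tight, hence a face of $F$ and therefore of $R$; one checks $\pi(F')=P\cap\{a^\trans x=\beta\}$, so $(\sigma\circ\pi)(F')=Q$, and iterating over finitely many defining equalities handles a general face $Q$ of $P$. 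Applying this inductively along $G=G^{(0)}\to G^{(1)}\to\cdots\to G^{(k)}=G'$, it suffices to treat one operation.

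For each operation I would use the same template as in the proof of Theorem~\ref{thm:making_a_graph_planar}: (i) exhibit valid inequalities for $\STAB(G')$ supported on the newly introduced gadget; (ii) let $F$ be the face of $\STAB(G')$ on which all of them hold with equality; (iii) define the linear projection that keeps every coordinate corresponding to an unmodified vertex of $G$ and, for a replaced vertex $v$, outputs the coordinate of one designated gadget vertex; (iv) check that this map sends the $0/1$ points of $F$ onto the stable sets of $G$ and, conversely, that every stable set of $G$ is the image of a $0/1$ point of $F$. Since every vertex of $F$ is a vertex of $\STAB(G')$, hence a $0/1$ stable-set vector lying in $F$, the image of $F$ under this map is the convex hull of the images of the vertices of $F$, which by (iv) equals $\STAB(G)$ --- exactly the claim.

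For \textbf{ReduceDegree} the gadget is the even cycle $C_v$ of length $2\delta$ on $v_1,v_1',\dots,v_\delta,v_\delta'$, and the valid inequalities are the $\delta$ edge inequalities $x_{v_i}+x_{v_i'}\le1$ forming a perfect matching of $C_v$; on the face where all are tight, propagating around the cycle forces $S\cap V(C_v)$ to be one of the two maximum stable sets $\{v_1,\dots,v_\delta\}$ or $\{v_1',\dots,v_\delta'\}$, and with $v_1$ as designated vertex these two possibilities mirror exactly the two roles of $v$ in $G$ (in the set, blocking all its neighbours; or outside the set, with its neighbours unconstrained). For \textbf{RemoveTerminal} applied to a pendant vertex $v$ with neighbour $u$, the gadget is the triangle $t_1t_2t_3$ with $t_1$ joined to $u$; the single valid inequality is the clique inequality $x_{t_1}+x_{t_2}+x_{t_3}\le1$, whose tight face forces exactly one $t_k$ into $S$, and taking $t_1$ as designated vertex makes $(x_{t_1},x_u)$ range over precisely the stable configurations of the edge $uv$. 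The degree-two and degree-three variants use the same triangle but need a little extra care in choosing which inequalities to set tight.

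The one genuinely nonobvious gadget is \textbf{RemoveBridge}: the degree-two vertex $v$ with neighbours $u,w$ is replaced by the graph on $v_1,v_2,v_3,v_4$ equal to $K_4$ minus the edge $v_1v_3$, with $u$ joined to $v_1$ and $w$ to $v_3$. The key observation is that $\{v_1,v_2,v_4\}$ and $\{v_2,v_3,v_4\}$ are both triangles, so $x_{v_1}+x_{v_2}+x_{v_4}\le1$ and $x_{v_2}+x_{v_3}+x_{v_4}\le1$ are valid, and on the face $F$ where both are tight, subtracting forces $x_{v_1}=x_{v_3}$. Hence on $F$ either $v_1,v_3\in S$ (forcing $v_2=v_4=0$ and $u,w\notin S$) or $v_1,v_3\notin S$ (then exactly one of $v_2,v_4$ is in $S$, and $u,w$ are unconstrained) --- precisely the two behaviours of the degree-two vertex $v$ --- so reading off $x_{v_1}$ as the image of $x_v$ gives the required projection. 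The remaining work is the finite case analysis in step (iv) for each gadget; I expect the main obstacle to be getting these analyses exactly right, in particular spotting the ``two tight triangles force equal coordinates'' identity for RemoveBridge and, for RemoveTerminal, pinning down a designated vertex and a set of tight inequalities that make that vertex imitate $v$ under every configuration of $v$'s neighbours.
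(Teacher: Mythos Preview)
Your proposal is correct and follows the same template as the paper: reduce to a single operation, exhibit valid inequalities supported on the gadget, pass to the face where they are tight, and project. The specific choices differ but define the same faces. For \textbf{ReduceDegree} the paper uses the single inequality $\sum_{w\in V_{C_v}}x_w\le\delta$, whereas you use the $\delta$ matching-edge inequalities $x_{v_i}+x_{v'_i}\le1$; these cut out the same face, since a $\delta$-element stable set in the $2\delta$-cycle saturates every such edge and conversely. For \textbf{RemoveBridge} the paper uses the single inequality $x_{v_1}+x_{v_3}+2(x_{v_2}+x_{v_4})\le2$, which is exactly the sum of your two triangle inequalities and hence is tight iff both of yours are. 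Your coordinate projection $x_v=x_{v_1}$ here is in fact cleaner than the paper's stated map $x_v=x_{v_2}+x_{v_4}$: as written, the latter assigns $x_v=1$ precisely in the gadget configurations where $u,w$ are \emph{unconstrained}, i.e.\ it is off by a complement; on the face one has $x_{v_1}=1-(x_{v_2}+x_{v_4})$, so your choice gives the intended orientation directly. For \textbf{RemoveTerminal} the paper simply asserts the conclusion is easy, so your clique-inequality argument supplies detail the paper omits. Finally, your explicit composition lemma (projection-of-a-face composes with projection-of-a-face) is a useful addition that the paper leaves implicit.
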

\begin{proof}
It suffices to show that the theorem is true for a single application of either of the three operations.

Consider an application of the operation ReduceDegree. Let $C$ be the gadget that was used to replace a vertex $v$ in $G$ to obtain $G'.$ Let $V_{C}$ denote the set of vertices of $C.$ Then, for any stable set $S$ of $G',$ the set $S\cap V_{C}$ is a stable set for $C.$ Every stable set of $C$ has fewer than $\delta=|C|/2$ vertices and hence $\sum_{v\in V_C} x_v \leqslant |C|/2$ is a valid inequality for $\STAB(G').$ Consider the face $$F:= \STAB(G')\bigcap_{i=1}^k\{x\mid\sum_{v\in V_{C_i}}x_v=|C_i|/2\}$$

Any stable set $S$ lying in the face $F$ must either select all vertices $(v'_1,\ldots,v'_\delta)$ or $(v_1,\ldots,v_\delta)$ for each cycle $C$ of length $2\delta.$ Furthermore, if $S$ contains any neighbour of $v$ then the former set of vertices must be picked in $S.$ Also, any stable set of $G$ can be extended to a stable set of $G'$ that lies in $F.$ For each stable set in $F$ projecting out every vertex of the cycles introduced except any one that has degree $3$ gives us a valid stable set of $G$ and therefore, $\STAB(G)$ is the projection of a face of $\STAB(G').$

\medskip
 
On the other hand, suppose operation RemoveBridge is used to transform any graph $G$ into a graph $G'.$ Let $C$ be the gadget used to replace a vertex $v$ in $G.$ Let $V_{C}=(v_1,v_2,v_3,v_4)$ denote the set of vertices of $C.$ Then, for any stable set $S$ in $G',$ the set $S\cap V_{C}$ is a stable set for $C.$ It is easy to see that every stable set of $C$ satisfies the inequality $x_{v_1}+x_{v_3}+2(x_{v_2}+x_{v_4})\leqslant 2$ and hence it is a valid inequality for $\STAB(G').$ Define $h_C$ to be the equality obtained from the previous inequality for a gadget $C$ and consider the face $$F:= \STAB(G')\bigcap_{i=1}^k h_{C_i}$$

Any stable set $S$ of $G'$ lying in the face $F$ must either select vertices $(v_1, v_3)$ or one of $v_2$ or $v_4$ for each gadget $C.$ Furthermore, if $S$ contains any neighbour of $v$ then it contains exactly one of $v_2$ or $v_4$ but not both. Also, any stable set of $G$ can be extended to a stable set of $G'$ that lies in $F.$ For each stable set in $F$ projecting out every vertex of the gadget and using the map $x_v=x_{v_2}+x_{v_4}$ gives us a valid stable set of $G$ and therefore, $\STAB(G)$ is the projection of $F,$ a face of $\STAB(G').$ 

\medskip

Finally it is easy to see that if $G'$ is obtained by applying operation RemoveTerminal on a graph $G$ then $\STAB(G)$ is a projection of $\STAB(G').$
\end{proof}

If $G$ has $n$ vertices and $m$ edges then first applying operation ReduceDegree until every vertex has degree at most 3, and then applying operation RemoveBridge and RemoveTerminal repeatedly until no vertex of degree 0, 1 or 2 is left,  produces a graph that has $O(n+m)$ vertices and $O(n+m)$ edges. Furthermore, any application of the three operations do not make a planar graph non-planar. Combining this fact with Theorem \ref{thm:make_cubic}, Corollary \ref{cor:stab_planar} and propositions 1 and 2, we have

\begin{corollary}\label{cor:stab_cubic_planar}For every natural number $n\geqslant 1$ there exists a cubic planar graph $G$ with $O(n)$ vertices and edges such that $\xc(\STAB(G))\geqslant 2^{\Omega(n^{1/4})}.$
\end{corollary}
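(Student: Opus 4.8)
The plan is to chain together the reductions already in hand: Corollary~\ref{cor:stab_planar} supplies a \emph{planar} graph with high stable-set extension complexity, and Theorem~\ref{thm:make_cubic} converts an arbitrary graph into a cubic one while losing only a constant factor in size and not decreasing $\xc(\STAB(\cdot))$. Concretely, I would first fix a parameter $t$ and apply Corollary~\ref{cor:stab_planar} to obtain a planar graph $P_t$ with $O(t^2)$ vertices and $O(t^2)$ edges such that $\xc(\STAB(P_t)) \geqslant 2^{\Omega(\sqrt t)}$.

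Next I would turn $P_t$ into a cubic planar graph $Q_t$ by applying the operations \textbf{ReduceDegree}, \textbf{RemoveBridge}, and \textbf{RemoveTerminal}: repeatedly use \textbf{ReduceDegree} until every vertex has degree at most $3$, then use \textbf{RemoveBridge} and \textbf{RemoveTerminal} until no vertex of degree $0$, $1$, or $2$ remains. Each operation can be performed respecting a fixed planar embedding — the replacement gadgets are themselves planar and are inserted locally, in the cyclic order of the edges incident to the replaced vertex — so $Q_t$ is planar. For the size: \textbf{ReduceDegree} replaces a degree-$\delta$ vertex by a cycle on $2\delta$ vertices, so the number of newly created vertices is $O\!\left(\sum_{v}\deg_{P_t}(v)\right) = O(|E(P_t)|) = O(t^2)$, and the \textbf{RemoveBridge}/\textbf{RemoveTerminal} steps each add only a constant number of vertices per vertex; hence $Q_t$ has $O(t^2)$ vertices and $O(t^2)$ edges. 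By Theorem~\ref{thm:make_cubic}, $\STAB(P_t)$ is the projection of a face of $\STAB(Q_t)$, so Propositions~\ref{prop:ef} and~\ref{prop:face} give $\xc(\STAB(Q_t)) \geqslant \xc(\STAB(P_t)) \geqslant 2^{\Omega(\sqrt t)}$.

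Finally I would reparametrize: setting $n := |V(Q_t)| = \Theta(t^2)$ gives $t = \Theta(\sqrt n)$, hence $\sqrt t = \Theta(n^{1/4})$ and $\xc(\STAB(Q_t)) \geqslant 2^{\Omega(n^{1/4})}$, which is the claim (a cubic graph on $n$ vertices automatically has $O(n)$ edges). The only real obstacle is the routine but delicate verification that the three operations can be scheduled so that the output is genuinely $3$-regular — in particular that \textbf{ReduceDegree} does not reintroduce high-degree vertices and that the degree-$2$ vertices it creates (together with any low-degree vertices already present in $P_t$) are all eliminated by \textbf{RemoveBridge}/\textbf{RemoveTerminal} — together with the accompanying size bound $O(t^2)$ and the preservation of planarity; each of these is straightforward once stated carefully.
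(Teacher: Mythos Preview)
Your proposal is correct and follows essentially the same route as the paper: start from the planar graph of Corollary~\ref{cor:stab_planar}, apply the three operations of Theorem~\ref{thm:make_cubic} (noting they preserve planarity and blow the size up by at most $O(n+m)$), invoke Propositions~\ref{prop:ef} and~\ref{prop:face}, and reparametrize. Your write-up in fact supplies more of the size and planarity bookkeeping than the paper does.
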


\section{Extended formulations for $\CutP(G)$ and its relatives}\label{sec:ext_cut}

We use the results described in the previous section to obtain bounds on
the extension complexity of the cut polytope of graphs.
We begin by reviewing the result in \cite{FMPTW} for $\CutP_n$
using a direct
argument that avoids introducing correlation polytopes.
For any integer $n \ge 2$ consider the integers $b_1=...=b_{n-1}=1$
and $b_{n}=3-n$. Let $b=(b_1, b_2, ... , b_n)$ be the corresponding $n$-vector.
Inequality (\ref{eq:hyp}) for this $b$-vector is easily
seen to be of negative type and can be written
\begin{equation} \label{eq:hyp2}
\sum_{1 \leq i < j \leq n-1} x_{ij} \leq 1 + (n-3) \sum_{i=1}^{n-1} x_{in}.
\end{equation}
\begin{lemma}\label{lem:slack}
Let $S$ be any cut in $K_n$ not containing vertex $n$ and let
$\delta(S)$ be its corresponding cut vector. 
Then the slack of $\delta(S)$ with respect to (\ref{eq:hyp2})
is ($|S|-1)^2$.
\end{lemma}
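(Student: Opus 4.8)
The plan is to compute the slack of the cut vector $\delta(S)$ directly from the definition of the slack matrix, namely (right-hand side) minus (left-hand side) of the valid inequality \eqref{eq:hyp2}, specialized to a cut $S$ with $n \notin S$. First I would recall from the proof of Lemma~\ref{lem:hyp}, equation \eqref{eq:hyp1}, that for the $b$-vector $b_1=\cdots=b_{n-1}=1$, $b_n=3-n$, the left-hand side of the general hypermetric form evaluates on $\delta(S)$ to $(\sum_{i\in S} b_i)(\sum_{i\notin S} b_i)$. Since $n\notin S$, every index in $S$ has $b_i=1$, so $\sum_{i\in S}b_i=|S|$; and $\sum_{i=1}^n b_i = (n-1)+(3-n)=2$, so $\sum_{i\notin S}b_i = 2-|S|$. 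Hence the left-hand side of \eqref{eq:hyp2} minus $(n-3)\sum_{i=1}^{n-1}x_{in}$ evaluates to $|S|(2-|S|)$ on $\delta(S)$.

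Next I would observe that the sum of the $b_i$ is $2$, which is even, so by the computation in Lemma~\ref{lem:hyp} the floor is redundant and the right-hand side of \eqref{eq:hyp2} (after moving the $x_{in}$ terms to the right) is exactly $\lfloor 2^2/4\rfloor = 1$. Therefore the slack of $\delta(S)$ with respect to \eqref{eq:hyp2}, written in the form ``RHS $-$ LHS'' for the inequality as displayed (i.e.\ $1 + (n-3)\sum_{i} x_{in} - \sum_{i<j} x_{ij}$ evaluated at $\delta(S)$), equals $1 - |S|(2-|S|) = |S|^2 - 2|S| + 1 = (|S|-1)^2$, which is the claimed value. I should double-check the bookkeeping: the terms $x_{in}$ for $i<n$ all vanish on $\delta(S)$ because $n\notin S$ forces $\delta(S)_{in}=1$ precisely when $i\in S$ — wait, that is nonzero — so I must be careful here; actually $\delta(S)_{in}=1$ iff exactly one of $i,n$ is in $S$, i.e.\ iff $i\in S$, so $\sum_{i=1}^{n-1}\delta(S)_{in}=|S|$, and this is exactly why it is cleaner to use \eqref{eq:hyp1} directly rather than splitting into the $x_{in}$ and $x_{ij}$ ($i,j<n$) parts separately. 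Using \eqref{eq:hyp1} sidesteps this entirely.

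The only genuine subtlety — and the step I would be most careful about — is the sign convention for "slack": one must express \eqref{eq:hyp2} in the canonical form $A_k x \le c_k$ so that the slack $c_k - A_k \delta(S)$ is manifestly nonnegative, and confirm that $(|S|-1)^2 \ge 0$ is consistent with validity of the inequality (which it is, and indeed this reproves validity of this particular inequality on the relevant cuts). There is no real obstacle beyond this routine but sign-sensitive arithmetic; the key insight is simply to invoke \eqref{eq:hyp1} with the specific $b$-vector and the constraint $n\notin S$ to get $\sum_{i\in S}b_i = |S|$ and $\sum_{i\notin S} b_i = 2-|S|$, whereupon the slack is $1 - |S|(2-|S|) = (|S|-1)^2$.
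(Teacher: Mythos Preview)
Your proof is correct and amounts to the same direct computation as the paper's, just organized slightly differently: the paper evaluates the two sums in \eqref{eq:hyp2} separately (using $\sum_{i<n}\delta(S)_{in}=|S|$ and $\sum_{i<j<n}\delta(S)_{ij}=|S|(n-1-|S|)$) and then simplifies, whereas you appeal to the factorization identity \eqref{eq:hyp1} to get $|S|(2-|S|)$ in one step. The stream-of-consciousness aside about the $x_{in}$ terms is unnecessary once you commit to using \eqref{eq:hyp1}, so in a clean write-up you should drop it.
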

\begin{proof}
$$
1 + (n-3) \sum_{i=1}^{n-1} \delta(S)_{1i}-\sum_{1 \leq i < j \leq n-1} \delta(S)_{ij} 
= 1+(n-3)|S|-|S|(n-|S|-1)=|S|^2 -2|S|+1.
$$
\end{proof}
Let us label a cut $S$ by a binary $n$-vector $a$ where $a_i =1$ if and only
if $i \in S$. Under the conditions of the lemma we observe that the
slack $(|S|-1)^2= (a^Tb-1)^2$ since we have $a_n=0$ and $b_1=...=b_{n-1}=1$. 
Now consider
consider any subset $T$ of $\{1,2,...,n-1\} $ and set $b_i = 1$ for $i \in T$,
$b_n=3-|T|$ and $b_i = 0$ otherwise. We form a $2^{n-1}$ by $2^{n-1}$
matrix $M$ as follows. Let the rows and columns be indexed 
by subsets $T$ and $S$ of $\{1,2,...,n-1 \}$, labelled by the $n$-vectors
$a$ and $b$ as just described.
A straight forward application of Lemma \ref{lem:slack}
shows that $M = M^*(n-1)$. Hence using the fact that the non-negative rank of a matrix is at least as large as that of any of its submatrices, we have that every extended formulation of $\CutP_n$ has size $2^{\Omega(n)}$.

Recall the hypermetric polytope, defined in Section \ref{cutdef}, is the intersection of all hypermetric inequalities.
As remarked, nonnegative type inequalities are weaker than hypermetric inequalities and so valid for this polytope.
In addition all cut vertices satisfy all hypermetric inequalities. Therefore $M = M^*(n-1)$ is also a submatrix of
a slack matrix for the hypermetric polytope on $n$ points. So this polytope also has extension complexity at least $2^{\Omega(n)}$.

Finally let us consider the polytope, which we denote $P_n$, defined by the inequalities used to define rows of 
the slack matrix $M$ above.
We will show that membership testing for $P_n$ is co-NP-complete. 

\begin{theorem}
Let $P_n$ be the polytope defined as above, and let $x\in\RR^{{n(n-1)/2}}.$ Then it is co-NP-complete to decide if $x\in P_n.$
\end{theorem}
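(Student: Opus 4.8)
The plan is to prove the two halves separately: membership in $P_n$ lies in $\coNP$ (routine), and it is $\coNP$-hard, the latter by a reduction from \textsc{Max-Cut}. Throughout, the input is understood to be $n$ together with a rational vector $x\in\RR^{n(n-1)/2}$.

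\emph{Membership is in $\coNP$.} By construction $P_n=\{x\mid f_T(x)\le 2\text{ for every }T\subseteq[n-1]\}$, where $f_T(x)=\sum_{1\le i<j\le n}b_ib_jx_{ij}$ is the left-hand side of inequality~(\ref{eq:hyp}) for the $b$-vector attached to $T$; here $\sum_i b_i=3$ for every such $T$, so the right-hand side is always $\lfloor 9/4\rfloor=2$. Hence $x\notin P_n$ is witnessed by a single subset $T\subseteq[n-1]$, a certificate of linear size, and verifying $f_T(x)>2$ takes $O(n^2)$ rational arithmetic operations on the input. So non-membership is in $\NP$, and membership is in $\coNP$.

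\emph{Rewriting the constraints.} Identify $T$ with its indicator $z\in\{0,1\}^{n-1}$. Expanding $f_T(x)=\sum_{\{i,j\}\subseteq T}x_{ij}+(3-|T|)\sum_{i\in T}x_{in}$ and using $z_i^2=z_i$ gives
\begin{equation*}
f_T(x)=\sum_{1\le i<j\le n-1}\bigl(x_{ij}-x_{in}-x_{jn}\bigr)z_iz_j+\sum_{i=1}^{n-1}2x_{in}\,z_i .
\end{equation*}
Thus $x\in P_n$ iff the quadratic pseudo-Boolean function $z\mapsto\sum_{i<j}c_{ij}z_iz_j+\sum_i d_iz_i$ stays below $2$ on the cube, where $c_{ij}=x_{ij}-x_{in}-x_{jn}$ and $d_i=2x_{in}$. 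Crucially this correspondence $x\leftrightarrow(c,d)$ is onto and linearly invertible ($x_{in}=d_i/2$, $x_{ij}=c_{ij}+x_{in}+x_{jn}$), so deciding $x\in P_n$ is exactly deciding whether an \emph{arbitrary} rationally weighted quadratic pseudo-Boolean function exceeds $2$ somewhere on $\{0,1\}^{n-1}$.

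\emph{Reduction from \textsc{Max-Cut}.} Given a graph $H=([n-1],E)$ and integer $k\ge 2$ (instances with $k\le 1$ are decidable in polynomial time), recall that the cut induced by $z$ has size $\sum_i\deg_H(i)z_i-2\sum_{\{i,j\}\in E}z_iz_j$. Choose $x$ so that $f_T(x)=\tfrac{2}{k-1}\cdot(\text{size of cut }T)$ for every $T$, i.e.\ $x_{in}=\tfrac{\deg_H(i)}{k-1}$, and $x_{ij}=\tfrac{\deg_H(i)+\deg_H(j)}{k-1}-\tfrac{2}{k-1}$ if $\{i,j\}\in E$, $x_{ij}=\tfrac{\deg_H(i)+\deg_H(j)}{k-1}$ otherwise. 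These are rationals of polynomial bit-length and $x$ is computable in polynomial time. Then $\max_T f_T(x)=\tfrac{2}{k-1}\,\mathrm{maxcut}(H)$, so $x\notin P_n$ iff $\mathrm{maxcut}(H)>k-1$ iff $\mathrm{maxcut}(H)\ge k$. Since \textsc{Max-Cut} is $\NP$-complete (Karp~\cite{Karp}), non-membership in $P_n$ is $\NP$-hard, hence membership in $P_n$ is $\coNP$-hard; with the first part it is $\coNP$-complete.

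\emph{Main obstacle.} The only non-routine point is the expansion in the second paragraph together with the realization that it yields an \emph{arbitrary} quadratic pseudo-Boolean function ($c$ and $d$ are free and $x$ is recovered from them linearly); once this is in place, substituting the standard cut function — which also keeps the coefficients polynomially bounded — completes the reduction. The remaining items (the right-hand side of every constraint in this subfamily equals $2$, the strict-versus-nonstrict threshold, and the trivial cases $k\le 1$ and $E=\emptyset$) are easy to check.
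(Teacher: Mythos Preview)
Your argument is essentially correct and takes a genuinely different route from the paper. The paper reduces from \textsc{Clique}: given a graph $G$ (assumed w.l.o.g.\ to be a suspension over vertex $n$) and threshold $k$, it sets $x_{in}=1/k$, $x_{ij}=2/k$ for edges $ij\in E$ with $j\neq n$, and $x_{ij}=-n^2$ for non-edges; the huge negative weight on non-edges forces any violated inequality to come from a $T$ inducing a clique, and then a direct calculation shows violation occurs iff $G$ contains a $k$-clique. Your approach is more structural: you observe that after the change of variables $c_{ij}=x_{ij}-x_{in}-x_{jn}$, $d_i=2x_{in}$, the family $\{f_T\}$ runs over \emph{all} homogeneous quadratic pseudo-Boolean functions, so deciding $x\in P_n$ is literally the decision version of unconstrained quadratic $0$--$1$ maximization; then you specialize to the cut function. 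This is a cleaner and more general reduction (it makes transparent why the membership problem is hard), whereas the paper's reduction is more ad hoc but avoids the change-of-variables step.

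One arithmetic slip to fix: with your formulas you get $c_{ij}=-2/(k-1)$ on edges, so
\[
f_T(x)=\tfrac{2}{k-1}\Bigl(\sum_i \deg_H(i)\,z_i-\sum_{\{i,j\}\in E}z_iz_j\Bigr),
\]
which is $\tfrac{2}{k-1}$ times the number of edges \emph{incident} to $T$, not the cut size. To realize $\tfrac{2}{k-1}\cdot\mathrm{cut}_H(T)$ you need $c_{ij}=-4/(k-1)$ on edges, i.e.\ $x_{ij}=\tfrac{\deg_H(i)+\deg_H(j)}{k-1}-\tfrac{4}{k-1}$ for $\{i,j\}\in E$. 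With this correction your reduction goes through verbatim. (Also note that the paper's own proof parameterizes by $t=|T|+1$ with $b_n=3-t=2-|T|$, so the right-hand side there is $1$ rather than $2$; this is a harmless discrepancy between the paper's definition and its proof, and your reading of the definition is the literal one.)
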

\begin{proof}
Clearly if $x \notin P_n$ then this can be witnessed by a violated inequality of type (\ref{eq:hyp2}),
so the problem is in co-NP.

To see the hardness we do a reduction from the clique problem: given graph $G=(V,E)$ on $n$ vertices and
integer $k$, does $G$ have a clique of size at least $k$? Since a graph has a clique of size $k$ if and only if its suspension has a clique of size $k+1$ we can assume \emph{wlog} that $G$ is a suspension with vertex $v_n$ connected to every other vertex.

Form a vector $x$  as follows:
$$ x_{ij}=
\begin{cases} 
1/k, & \mbox{if } j=n \\
2/k, & \mbox{if } j\neq n \mbox{ and } ij\in E\\
-n^2 & \mbox{otherwise}
\end{cases},$$


Fix an integer $t$, $2 \le t \le n$ and consider a $b$-vector with $b_n=3-t$, and with $t-1$ other values of $b_i=1$.
Without loss of generality we may 
assume these are lablelled $1,2,...,t-1$.
Let $T$ be the induced subgraph of $G$ on these vertices.
The corresponding non-negative type inequality is:
\begin{equation} \label{eq:hyp3}
\sum_{1 \leq i < j \leq t-1} x_{ij} \leq 1 + (t-3) \sum_{i=1}^{t-1} x_{in}.
\end{equation}

Suppose $T$ is a complete subgraph. Then the left hand side minus the right hand side of (\ref{eq:hyp3}) is
\[
\frac{2(t-1)(t-2)}{2k}-(1+\frac{(t-3)(t-1)}{k})=\frac{t-k-1}{k}.
\]
This will be positive if and only if $t \ge k+1$, in which case $x$ violates (\ref{eq:hyp3}).
On the other hand if $T$ is not a complete subgraph then the left hand side of (\ref{eq:hyp3})
is always negative and so the inequality is satisfied. Therefore  
$x$ satisfies all inequalities defining rows of $M$ if an only if $G$ has no clique of size at least $k$.
\end{proof}

\subsection{Cut polytope for minors of a graph}
A graph $H$ is a {\em minor} of a graph $G$ if $H$ can be obtained from $G$
by contracting some edges, deleting some edges and isolated vertices,
and relabeling. In the introduction we noted that if 
an $n$ vertex graph $G$ has no $K_5$-minor
then $\CutP(G)$ has $O(n^3)$ extension complexity. We will now show
that the extension complexity of a graph $G$ can be bounded from below
in terms of its largest clique minor. 

\begin{lemma}\label{lem:edge_deletion}
Let $G$ be a graph and let $H$ be obtained by deleting an edge $e$ of $G,$ then $\CutP(G)$ is an extension of $\CutP(H).$ In particular, $\xc(\CutP(G))\geqslant\xc(\CutP(H)).$
\end{lemma}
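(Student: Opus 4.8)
The plan is to exhibit $\CutP(H)$ as a coordinate projection of $\CutP(G)$, which by Proposition~\ref{prop:ef} immediately gives the extension complexity bound. Write $G=(V,E)$ with the deleted edge $e=uv$, so $H=(V,E\setminus\{e\})$. The natural map is $\pi:\RR^E\to\RR^{E\setminus\{e\}}$ that simply forgets the coordinate indexed by $e$. I would show $\pi(\CutP(G))=\CutP(H)$.

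First I would check the vertex-level correspondence. For any $S\subseteq V$, the cut vector $\vct\delta_G(S)\in\RR^E$ restricted to the edges $E\setminus\{e\}$ is exactly $\vct\delta_H(S)$, because the defining rule $\delta_{xy}(S)=[\,|S\cap\{x,y\}|=1\,]$ depends only on the edge $xy$ and the set $S$, not on which graph we are in. Hence $\pi$ carries the generating set $\{\vct\delta_G(S):S\subseteq V\}$ of $\CutP(G)$ onto the generating set $\{\vct\delta_H(S):S\subseteq V\}$ of $\CutP(H)$. Since $\pi$ is linear and a polytope is the convex hull of its vertices, $\pi(\CutP(G))=\pi(\conv\{\vct\delta_G(S)\})=\conv\{\pi(\vct\delta_G(S))\}=\conv\{\vct\delta_H(S)\}=\CutP(H)$. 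That is all that is needed: $\CutP(G)$ is an extension of $\CutP(H)$ in the sense of Section~\ref{sect:prelim}, and Proposition~\ref{prop:ef} yields $\xc(\CutP(H))\le\xc(\CutP(G))$.

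There is essentially no obstacle here; the only point requiring a word of care is that every subset $S$ of $V$ indexes a cut vector in \emph{both} $G$ and $H$ (deleting an edge does not change the vertex set), so the two vertex sets are indexed by the same family and the projection is surjective onto $\CutP(H)$, not merely into it. One could alternatively phrase the argument via slack matrices — a slack matrix of $\CutP(H)$ appears as a submatrix (after possibly dropping redundant rows) of one for $\CutP(G)$ — but the projection argument is cleaner and is exactly the hypothesis of Proposition~\ref{prop:ef}. I would also remark that this lemma is the edge-deletion half of the minor story; the companion contraction step will be handled separately, and together with relabeling invariance (trivial) and deletion of isolated vertices (which does not change $\CutP$ at all, since isolated vertices contribute no coordinates) they combine to bound $\xc(\CutP(G))$ below by $\xc(\CutP(K_t))$ for the largest clique minor $K_t$ of $G$.
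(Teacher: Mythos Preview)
Your proof is correct and follows essentially the same approach as the paper: both arguments exhibit $\CutP(H)$ as the coordinate projection of $\CutP(G)$ that forgets the $e$-coordinate, using that the same subsets $S\subseteq V$ index cut vectors in both graphs, and then invoke Proposition~\ref{prop:ef}. Your presentation is slightly more explicit about why the linear image of the convex hull equals the convex hull of the images, but the underlying idea is identical.
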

\begin{proof}
Any vertex $v$ of $\CutP(H)$ defines a cut on graph $H.$ Let $H_1$ and $H_2$ be the two subsets of vertices defined by this cut. Consider the same subsets over the graph $G,$ and the corresponding cut vector for $G$. This vector is the same as  $v$ extended with a coordinate corresponding to the edge $e$ in $G$ which was removed to obtain $H.$ The value on this coordinate is $0$ if the end points of this edge belong to different sides of the cut and $1$ otherwise. In either case, every vertex of $\CutP(G)$ projects to a vertex of $\CutP(H)$ and every vertex of $\CutP(H)$ can be lifted to a vertex of $\CutP(G).$

Therefore, $\CutP(G)$ is an extended formulation of $\CutP(H)$ and hence by Proposition \ref{prop:ef} $$\xc(\CutP(G))\geqslant\xc(\CutP(H)).$$
\end{proof}

\begin{lemma}\label{lem:vertex_deletion}
Let $G$ be a graph and let $H$ be obtained by deleting a vertex $v$ of $G,$ then $\CutP(G)$ is an extension of $\CutP(H).$ In particular, $\xc(\CutP(G))\geqslant\xc(\CutP(H)).$
\end{lemma}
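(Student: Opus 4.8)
The plan is to mimic the proof of Lemma~\ref{lem:edge_deletion}, exhibiting $\CutP(H)$ as a coordinate projection of $\CutP(G)$, so that Proposition~\ref{prop:ef} gives the conclusion. Let $G=(V,E)$ and let $H=G-v$ be the graph on vertex set $V\setminus\{v\}$ with edge set $E(H)=\{\,e\in E : v\notin e\,\}$. The natural map is $\proj:\RR^{E}\to\RR^{E(H)}$ that simply forgets the coordinates indexed by edges incident to $v$; this is a linear map, so it suffices to check that $\proj(\CutP(G))=\CutP(H)$.

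First I would verify $\proj(\CutP(G))\subseteq\CutP(H)$. Since $\proj$ is linear and $\CutP(G)$ is the convex hull of the cut vectors $\vct{\delta}_G(S)$ for $S\subseteq V$, it is enough to show $\proj(\vct{\delta}_G(S))$ lies in $\CutP(H)$ for each $S$. But for an edge $uw\in E(H)$ we have $u,w\neq v$, so $\delta_{uw}(S)=1$ iff $|S\cap\{u,w\}|=1$ iff $|(S\setminus\{v\})\cap\{u,w\}|=1$; hence $\proj(\vct{\delta}_G(S))=\vct{\delta}_H(S\setminus\{v\})$, which is a vertex of $\CutP(H)$. Conversely, for $\CutP(H)\subseteq\proj(\CutP(G))$, every vertex $\vct{\delta}_H(T)$ of $\CutP(H)$ with $T\subseteq V\setminus\{v\}$ equals $\proj(\vct{\delta}_G(T))$ by the same identity (taking $S=T$, which does not contain $v$), so every cut vector of $H$ lifts to a cut vector of $G$; taking convex hulls and using linearity of $\proj$ finishes both inclusions. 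Therefore $\CutP(G)$ is an extension of $\CutP(H)$, and Proposition~\ref{prop:ef} yields $\xc(\CutP(G))\geqslant\xc(\CutP(H))$.

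There is essentially no hard part here: the argument is a direct analogue of the edge-deletion lemma, the only difference being that deleting a vertex removes all edges incident to it at once rather than a single edge, and the projection identity $\proj(\vct{\delta}_G(S))=\vct{\delta}_H(S\setminus\{v\})$ handles this uniformly. The one point to state carefully is that the lift is genuinely onto the vertex set: every cut of $H$ arises from a cut of $G$ (indeed from many, since $v$ may be placed on either side), which is what guarantees surjectivity of $\proj$ onto $\CutP(H)$ rather than merely containment. Combined with Lemma~\ref{lem:edge_deletion} and closure under relabeling, this lemma gives the ingredients needed to push extension-complexity lower bounds from clique minors $K_t$ up to any graph $G$ having $K_t$ as a minor.
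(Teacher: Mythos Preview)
Your proof is correct and is precisely the argument the paper has in mind: the paper's own proof of this lemma is simply the sentence ``The proof is analogous to that of Lemma~\ref{lem:edge_deletion},'' and you have spelled out exactly that analogy, projecting away all coordinates indexed by edges incident to $v$ and checking bijectively on cut vectors that $\proj(\CutP(G))=\CutP(H)$.
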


The proof is analogous to that of Lemma \ref{lem:edge_deletion}.

\begin{lemma}\label{lem:edge_contraction}
Let $G$ be a graph and let $H$ be obtained by contracting an edge $e=(u,v)$ of $G,$ then $\CutP(H)$ is the projection of a face of $\CutP(G).$ In particular, $\xc(\CutP(G))\geqslant\xc(\CutP(H)).$
\end{lemma}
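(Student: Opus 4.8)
The plan is to exhibit $\CutP(H)$ as the projection of a specific face of $\CutP(G)$, which by Propositions \ref{prop:ef} and \ref{prop:face} immediately yields $\xc(\CutP(G)) \geqslant \xc(\CutP(H))$. First I would fix the edge $e=(u,v)$ of $G$ that is contracted, and let $w$ denote the resulting merged vertex in $H$, so that the edges of $H$ are in natural correspondence with the edges of $G$ other than $e$, except that edges $uv'$ and $vv'$ of $G$ (for a common neighbour $v'$ of $u$ and $v$) both map to the single edge $wv'$ of $H$ — this multi-edge issue is the only mild bookkeeping subtlety, and I would handle it by noting that on the face we select the two coordinates $x_{uv'}$ and $x_{vv'}$ are forced to be equal, so the projection that identifies them is well-defined.

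The key observation is that a cut $\delta_G(S)$ of $G$ has $\delta_{uv}(S) = 0$ precisely when $u$ and $v$ lie on the same side of the cut, i.e.\ when $S$ (or its complement) contains both $u$ and $v$; these are exactly the cuts of $G$ that descend to cuts of $H$ under the contraction. So I would define the face $F := \CutP(G) \cap \{x \mid x_e = 0\}$. Since $x_e \geqslant 0$ is a valid inequality for $\CutP(G)$ (it is the nonnegativity/negative-type inequality on the edge $e$, valid by Lemma \ref{lem:hyp}), $F$ is indeed a face. The vertices of $F$ are exactly the cut vectors $\delta_G(S)$ with $u,v$ on the same side of $S$: every vertex of $\CutP(G)$ lying in the hyperplane $x_e = 0$ is such a cut vector, and conversely every such cut vector lies in $F$.

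Then I would define the projection $\pi$ from $\RR^{E(G)}$ onto $\RR^{E(H)}$ that forgets the coordinate $x_e$ and identifies the pairs of coordinates coming from common neighbours of $u$ and $v$ (legitimate on $F$, as noted). For a cut $\delta_G(S)$ in $F$ with $u,v$ on the same side, letting $\bar S$ be the image of $S$ in $H$ (with $w \in \bar S$ iff $u,v \in S$), one checks edge-by-edge that $\pi(\delta_G(S)) = \delta_H(\bar S)$: an edge $xy$ of $H$ with $x,y \neq w$ is cut by $\bar S$ iff the corresponding edge of $G$ is cut by $S$; an edge $wv'$ of $H$ is cut by $\bar S$ iff $v'$ is separated from the common side of $u,v$, iff $uv'$ (equivalently $vv'$) is cut by $S$. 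Conversely, any cut $\delta_H(\bar S)$ of $H$ lifts: put $S = \bar S$ with $w$ replaced by $\{u,v\}$ (and $u,v \notin S$ if $w \notin \bar S$), giving a cut of $G$ with $u,v$ on the same side, hence a vertex of $F$ mapping onto $\delta_H(\bar S)$. Thus $\pi$ maps the vertex set of $F$ onto the vertex set of $\CutP(H)$, so by linearity $\pi(F) = \CutP(H)$.

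The main obstacle is not conceptual but notational: making the coordinate identification for common-neighbour edges precise and checking it is consistent on all of $F$ (not just at vertices), and being careful that the ``projection of a face'' is set up so that Propositions \ref{prop:ef} and \ref{prop:face} chain correctly — $F$ is a face of $\CutP(G)$ so $\xc(F) \leqslant \xc(\CutP(G))$, and $\CutP(H)$ is a projection of $F$ so $\xc(\CutP(H)) \leqslant \xc(F)$, giving the claimed inequality. Everything else is the same routine edge-by-edge cut-vector computation used in Lemmas \ref{lem:edge_deletion} and \ref{lem:vertex_deletion}.
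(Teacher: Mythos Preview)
Your proposal is correct and follows essentially the same approach as the paper: take the face $F=\CutP(G)\cap\{x_e=0\}$, project out the coordinate $x_e$ together with one of each duplicated pair $x_{uv'},x_{vv'}$ arising from common neighbours, and check that vertices of $F$ correspond bijectively to cut vectors of $H$. If anything, you are more explicit than the paper about why $F$ is a face and about the common-neighbour bookkeeping, but the argument is the same.
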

\begin{proof}
Suppose that the vertices $u,v$ are contracted to a new vertex labelled $u$ in $H.$ Consider the inequality $x_e\geqslant 0.$ This is a valid inequality for $\CutP(G).$ Consider the face $$F=\CutP(G)\cap\{x_e=0\}.$$

Consider any vertex of $F.$ Project out $x_e$ and also $x_{e'}$ for any $e'=(v,w)$ if $(u,w)$ is an edge in $G$. 
Clearly this linear map projects every vertex in $F$ to a vertex of $\CutP(H).$ 
Also, any vertex of $\CutP(H)$ can be lifted to a vertex of $\CutP(G)$ lying in $F$ as follows. 
Set $x_e=0,$ and for an edge $e'=(v,w)$ in $G$ we set $x_{vw}=x_{uw}.$ 
It is easy to check that this is a valid cut for $G$ that lies in $F.$ 

It is thus clear that $\CutP(H)$ is obtained as the projection of a face of $\CutP(G)$ by setting $x_e=0$ for the contracted edge $e.$ 
Hence by Proposition \ref{prop:face} $$\xc(\CutP(G))\geqslant\xc(\CutP(H)).$$
\end{proof}

Combining Lemma \ref{lem:edge_deletion}, \ref{lem:vertex_deletion}, and \ref{lem:edge_contraction} we get the following theorem.

\begin{theorem}
Let $G$ be a graph and $H$ be a minor of $G.$ Then, $$\xc(\CutP(G))\geqslant \xc(\CutP(H)).$$
\end{theorem}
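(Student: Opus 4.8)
The plan is to prove the theorem by a straightforward induction on the number of minor operations needed to obtain $H$ from $G$, using the three lemmas just established as the base cases for the three atomic operations. Recall that, by definition, $H$ is a minor of $G$ if it can be obtained by a finite sequence of edge deletions, vertex deletions (equivalently, deletion of isolated vertices after edge deletions), edge contractions, and relabelings.

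First I would dispose of relabeling: relabeling the vertices of a graph induces a coordinate permutation on $\RR^E$, which is a linear isomorphism, so $\CutP$ of a relabeled graph is linearly isomorphic to the original and hence has the same extension complexity. Next, I would set up the induction: let $G = G_0, G_1, \dots, G_t = H$ be a sequence of graphs where each $G_{i+1}$ is obtained from $G_i$ by one of the atomic operations (edge deletion, vertex deletion, edge contraction, relabeling). For each single step, one of Lemma~\ref{lem:edge_deletion}, Lemma~\ref{lem:vertex_deletion}, Lemma~\ref{lem:edge_contraction}, or the relabeling observation gives
\[
\xc(\CutP(G_i)) \geqslant \xc(\CutP(G_{i+1})).
\]
Chaining these inequalities from $i = 0$ to $i = t-1$ yields $\xc(\CutP(G)) \geqslant \xc(\CutP(H))$, which is exactly the claim.

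The only mild subtlety, and the step I would be most careful about, is making sure the three lemmas genuinely cover every atomic move in the standard definition of a minor, and that the composition is legitimate. Lemma~\ref{lem:edge_deletion} handles edge deletion; Lemma~\ref{lem:vertex_deletion} handles vertex deletion (in particular deletion of isolated vertices); Lemma~\ref{lem:edge_contraction} handles edge contraction. Since each lemma is stated as "$\CutP(G_{i+1})$ is a projection of $\CutP(G_i)$, or of a face of $\CutP(G_i)$," and since being a projection of a face is itself a relation for which $\xc$ is monotone (combining Propositions~\ref{prop:ef} and~\ref{prop:face}), the inequality passes through each step with no loss. Because extension complexity is just a number attached to each polytope, there is no accumulation of structure to track across steps — only the numerical inequality — so the induction is clean and requires no further bookkeeping. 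This is genuinely the easy direction, so I do not anticipate a real obstacle; the proof is essentially "apply the three lemmas repeatedly."

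\begin{proof}
Recall that $H$ is a minor of $G$ precisely when $H$ can be obtained from $G$ by a finite sequence of operations, each of which is the deletion of an edge, the deletion of a vertex, the contraction of an edge, or a relabeling of the vertices. We argue by induction on the length $t$ of such a sequence. If $t = 0$ then $H = G$ and the inequality is trivial. For the inductive step, write $H = G'$ as obtained from an intermediate graph $G''$ (itself a minor of $G$ reachable in $t-1$ steps) by a single operation. If the operation is a relabeling, then $\CutP(G')$ is the image of $\CutP(G'')$ under the coordinate permutation of $\RR^E$ induced by the relabeling, which is a linear isomorphism, so $\xc(\CutP(G')) = \xc(\CutP(G''))$. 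If the operation is an edge deletion, a vertex deletion, or an edge contraction, then by Lemma~\ref{lem:edge_deletion}, Lemma~\ref{lem:vertex_deletion}, or Lemma~\ref{lem:edge_contraction} respectively we have $\xc(\CutP(G'')) \geqslant \xc(\CutP(G'))$. In every case $\xc(\CutP(G'')) \geqslant \xc(\CutP(G'))$, and by the induction hypothesis $\xc(\CutP(G)) \geqslant \xc(\CutP(G''))$, so $\xc(\CutP(G)) \geqslant \xc(\CutP(H))$.
\end{proof}
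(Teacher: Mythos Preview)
Your proof is correct and follows exactly the approach the paper intends: the paper's own ``proof'' is simply the sentence that the theorem follows by combining Lemmas~\ref{lem:edge_deletion}, \ref{lem:vertex_deletion}, and~\ref{lem:edge_contraction}, and your write-up just makes the induction on the length of the minor sequence (and the relabeling step) explicit.
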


Using the above theorem together with the result of \cite{FMPTW} that the extension complexity of $\CutP(K_n)$ is at least $2^{\Omega(n)}$ we get the following result.

\begin{corollary}
The extension complexity of $\CutP (G)$ for a graph $G$ with a $K_n$ minor
is at least $2^{\Omega(n)}$.
\end{corollary}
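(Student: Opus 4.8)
The plan is to derive the corollary as an essentially immediate consequence of the preceding theorem combined with the Lower Bound Theorem (Theorem~\ref{LBT}). First I would recall the chain of facts already established: the Lower Bound Theorem gives $\xc(\CutP_n) = \xc(\CutP(K_n)) \geqslant 2^{\Omega(n)}$, and the theorem just proved states that whenever $H$ is a minor of $G$ one has $\xc(\CutP(G)) \geqslant \xc(\CutP(H))$. So if $G$ has a $K_n$ minor, we may take $H = K_n$ in that theorem to conclude $\xc(\CutP(G)) \geqslant \xc(\CutP(K_n))$, and then substitute the known lower bound for the complete graph.

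The key steps, in order, are thus: (1) invoke the hypothesis that $K_n$ is a minor of $G$; (2) apply the minor theorem with $H = K_n$ to get $\xc(\CutP(G)) \geqslant \xc(\CutP(K_n))$; (3) apply Theorem~\ref{LBT} (equivalently the direct argument given at the start of this section, which re-derives $M^*(n-1)$ as a submatrix of a slack matrix of $\CutP_n$) to get $\xc(\CutP(K_n)) = \xc(\CutP_n) \geqslant 2^{\Omega(n)}$; (4) chain the two inequalities. No calculation beyond transitivity of $\geqslant$ is required.

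There is essentially no main obstacle here, since all the machinery is already in place; the only point requiring a moment's care is that the minor theorem was stated for a single minor relation, so one should be clear that its hypothesis ``$H$ is a minor of $G$'' is exactly what ``$G$ has a $K_n$ minor'' means, i.e. one sets $H := K_n$. One might also remark that this bound is tight in spirit at the other extreme: for $K_5$-minor-free graphs on $n$ vertices the extension complexity is only $O(n^3)$, so the presence of large clique minors is genuinely what forces large extension complexity. If desired, the proof can be phrased in one sentence: by the minor theorem $\xc(\CutP(G)) \geqslant \xc(\CutP(K_n))$, which is $2^{\Omega(n)}$ by \cite{FMPTW} (Theorem~\ref{LBT}).
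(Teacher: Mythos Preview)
Your proposal is correct and matches the paper's own argument exactly: the paper derives the corollary in one line by combining the minor theorem with the $2^{\Omega(n)}$ lower bound on $\xc(\CutP(K_n))$ from \cite{FMPTW}.
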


Using this theorem we can immediately prove that the Bell inequality polytopes
mentioned in the introduction
have exponential complexity.
\begin{corollary}
The extension complexity of
$\CutP (K_{1,n,n})$ is at least $2^{\Omega(n)}$.
\end{corollary}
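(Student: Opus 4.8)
The plan is to apply the minor-extension theorem just proved, so the entire task reduces to exhibiting a large clique minor inside the complete tripartite graph $K_{1,n,n}$. Write the vertex set of $K_{1,n,n}$ as $\{w\}\cup A\cup B$ with $|A|=|B|=n$, where $w$ is the apex vertex adjacent to everything, and every vertex of $A$ is adjacent to every vertex of $B$ (but $A$ and $B$ are each independent sets, and $w$ is adjacent to all of $A\cup B$). I would then show that $K_{1,n,n}$ has a $K_{n+1}$ minor, which by the preceding corollary immediately gives $\xc(\CutP(K_{1,n,n}))\geqslant\xc(\CutP(K_{n+1}))\geqslant 2^{\Omega(n)}$.

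The construction of the $K_{n+1}$ minor is the main (and only) real step, and it is elementary. Label $A=\{a_1,\dots,a_n\}$ and $B=\{b_1,\dots,b_n\}$. Contract each edge $a_ib_i$ for $i\in[n]$ to a single branch vertex $c_i$; these $n$ contractions are legal since the edges $a_ib_i$ form a matching. After contraction, $c_i$ is adjacent to $c_j$ for all $i\neq j$, because the original edge $a_ib_j$ (present since $a_i\in A$, $b_j\in B$) now joins the contracted vertices. The apex $w$ remains adjacent to every $c_i$. Deleting nothing further, the resulting minor on $\{w,c_1,\dots,c_n\}$ is exactly $K_{n+1}$. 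Hence $K_{n+1}$ is a minor of $K_{1,n,n}$.

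Applying the theorem that $\xc(\CutP(G))\geqslant\xc(\CutP(H))$ whenever $H$ is a minor of $G$, with $G=K_{1,n,n}$ and $H=K_{n+1}$, together with the bound $\xc(\CutP(K_{n+1}))\geqslant 2^{\Omega(n)}$ from the Lower Bound Theorem, yields $\xc(\CutP(K_{1,n,n}))\geqslant 2^{\Omega(n)}$, as claimed. I do not anticipate any obstacle: the only thing to be careful about is checking that the $n$ contracted edges genuinely form a matching (so that contracting them simultaneously is well-defined and does not create multi-edges or loops affecting the argument), and that the cross-edges $a_ib_j$ really are all present in the complete tripartite graph, which they are by definition.
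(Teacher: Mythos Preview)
Your proposal is correct and follows essentially the same argument as the paper: contract a perfect matching between the two parts of size $n$ to obtain $K_{n+1}$ as a minor, then invoke the minor theorem together with the $2^{\Omega(n)}$ lower bound for $\CutP(K_{n+1})$. The paper's proof is simply a terser version of what you wrote.
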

\begin{proof}
Pick any matching of size $n$ between the vertices in each of the two parts of cardinality
$n$. Contracting the edges in this matching yields $K_{n+1}$ and the result follows
\end{proof}

\subsection{Cut Polytope for $K_6$ minor-free graphs}
Let $G=(V,E)$ be any graph with $V=\{1,\ldots,n\}.$ 
Consider the suspension $G'$ of $G$ obtained by adding an extra vertex labelled $0$ with edges to all vertices $V$. 

\begin{theorem}\label{thm:cut_suspension} Let $G=(V,E)$ be a graph and let $G'$ be a suspension over $G.$ Then $\STAB(G)$ is the projection of a face of $\CutP(G').$
\end{theorem}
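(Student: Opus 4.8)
The goal is to realize $\STAB(G)$ as the projection of a face of $\CutP(G')$, where $G'$ is the suspension of $G$ with apex vertex $0$. The natural dictionary is: a stable set $S \subseteq V$ should correspond to the cut $\delta_{G'}(S)$ of $G'$ defined by the \emph{same} vertex subset $S$ (with $0 \notin S$). Under this correspondence, for an edge $\{i,0\}$ of $G'$ we get $\delta_{i0}(S) = 1$ iff $i \in S$, so the ``spoke'' coordinates $x_{i0}$ literally encode the indicator vector of $S$; these are the coordinates we will keep under the projection. For an edge $\{i,j\} \in E$ of $G$ (hence of $G'$), we get $\delta_{ij}(S) = 1$ iff exactly one of $i,j$ lies in $S$. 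The key observation is that $S$ is stable iff it never happens that $i,j \in S$ for an edge $ij \in E$, i.e. iff for every edge $ij \in E$ we have $\delta_{ij}(S) = \delta_{i0}(S) + \delta_{j0}(S)$ (rather than $\delta_{i0}(S)+\delta_{j0}(S) - 2$, which is the value when both endpoints are in $S$). Equivalently, on the relevant face we need the triangle-type constraint $x_{i0} + x_{j0} - x_{ij} = 0$ for every $ij \in E$.

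So the plan is: First, identify the face. For each edge $ij \in E$, the inequality $x_{ij} \le x_{i0} + x_{j0}$ is a triangle inequality for $\CutP(G')$ on the triangle $\{i,j,0\}$ (valid since it is a projection of a triangle inequality on $\CutP_{n+1}$, or check directly on cut vectors), hence valid for $\CutP(G')$. Let $F := \CutP(G') \cap \bigcap_{ij\in E}\{x_{ij} = x_{i0} + x_{j0}\}$ be the face obtained by turning all these into equalities. Second, characterize the vertices of $F$: a cut vector $\delta_{G'}(T)$ lies on $F$ iff for every edge $ij\in E$, the triangle $\{i,j,0\}$ does \emph{not} have all three vertices of $T$ on one side — working through the cases, and using that $0$ is one of the three vertices, this forces that we cannot have both $i,j$ on the same side as... more carefully, $\delta_{ij}(T) = \delta_{i0}(T)+\delta_{j0}(T)$ fails exactly when $i,j$ are on the same side of the cut and $0$ is on the other, i.e. when $i,j \in T \symdiff \{0\}$ lie together opposite $0$. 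By possibly complementing $T$ (cut vectors are invariant under $T \mapsto V(G')\setminus T$), we may assume $0 \notin T$; then the condition says exactly that no edge $ij \in E$ has both endpoints in $T$, i.e. $T \cap V$ is a stable set of $G$. Third, define the projection $\pi$ that keeps the spoke coordinates $x_{i0}$, $i \in V$, and discards the edge coordinates $x_{ij}$, $ij \in E$. By the previous step, $\pi$ sends each vertex $\delta_{G'}(T)$ of $F$ (normalized so $0 \notin T$) to the indicator vector of the stable set $T \cap V$, and conversely every stable set $S$ arises this way from $T = S$. Since $\pi$ is linear and maps the vertex set of $F$ onto the vertex set of $\STAB(G)$, it maps $F$ onto $\STAB(G)$. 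Finally, invoke Propositions~\ref{prop:ef} and~\ref{prop:face} to conclude $\xc(\CutP(G')) \ge \xc(\STAB(G))$.

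The one subtlety — and the step I'd be most careful with — is the complementation issue in the vertex characterization: a given face $F$ of $\CutP(G')$ is a polytope whose vertices are among the cut vectors, and each cut vector $\delta_{G'}(T)$ equals $\delta_{G'}(V(G')\setminus T)$, so ``$0 \notin T$'' is a genuinely free normalization, but I must make sure the projection is well-defined on the \emph{vector} (not the set) and that both $T$ and its complement, when one of them has $0 \notin T$ giving a stable set $S$, indeed project consistently to the indicator of $S$ — which they do, since the spoke coordinate $x_{i0}$ of $\delta_{G'}(T)$ is $1$ iff exactly one of $i, 0$ is in $T$, and this is symmetric under complementation. The rest is routine case-checking on cut vectors of triangles. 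A clean way to package the validity and the vertex analysis uniformly is to note that $\{i,j,0\}$ spans a $K_3$ subgraph of $G'$, restrict attention to those three coordinates, and use the elementary fact that a $0/1$ vector $(x_{i0}, x_{j0}, x_{ij})$ is a cut vector of this triangle iff $x_{i0} + x_{j0} + x_{ij}$ is even, with the face condition $x_{ij} = x_{i0} + x_{j0}$ then excluding precisely the cut $(1,1,0)$, i.e. the one where $i,j$ are together and $0$ is separated.
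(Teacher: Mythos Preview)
Your proposal is correct and follows essentially the same approach as the paper: the paper uses exactly the same triangle inequalities $x_{0i}+x_{0j}-x_{ij}\ge 0$ for $ij\in E$, intersects $\CutP(G')$ with the corresponding equalities to define the face $F$, and projects onto the spoke coordinates $x_{0i}$. Your treatment is in fact more careful than the paper's about the complementation normalization and the case analysis on the triangle, but the argument is the same.
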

\begin{proof}
The polytope $\STAB(G)$ is defined over variables $x_i$ corresponding to each of the vertex $i\in V$ whereas the polytope $\CutP(G')$ is defined over the variables $x_{ij}$ for $i,j\in\{0,\ldots,n\}.$

Any cut vertex $C$ of $\CutP(G')$ defines sets $S,\overline{S}$ such that $x_{ij}=1$ if and only if $i\in S, j\in \overline{S}.$ 
We may assume that $0\in\overline{S}$ by interchanging $S$ and $\overline{S}$ if necessary. 
For every edge $e=(k,l)$ in $G$ consider an inequality $h_e:= \{x_{0k}+x_{0l}-x_{kl}\geqslant 0\}.$ It is clear that $h_e$ is a valid inequality for $\CutP(G')$ for all edges $e$ in $G.$ Furthermore, $h_e$ is tight for a cut vector in $G'$ if and only if either $k,l$ do not lie in the same cut set or $k,l$ both lie in the cut set containing $0$. Therefore consider the face $$F:= \CutP(G')\bigcap_{(i,j)\in E}\{x_{0i}+x_{0j}-x_{ij}= 0\}.$$

Each vertex in $F$ can be projected to a valid stable set in $G$ by projecting onto the variables $x_{01},x_{02},\ldots,x_{0n}.$ Furthermore, every stable set $S$ in $G$ can be extended to a cut vector for $G'$ by taking the cut vector corresponding to $S,\overline{S}\cup\{0\}.$ Therefore, $\STAB(G)$ is the projection of a face of $\CutP(G').$
\end{proof}

Using this theorem it is easy to show the existence of graphs with a linear number of edges
that do not have $K_6$ as a minor and yet have a high extension complexity.
In fact we get a slightly sharper result.

\begin{theorem}
For every  $n\geqslant 2$ there exists a graph $G$ 
which is a suspension of a planar graph and for which $\xc(\CutP(G))\geqslant 2^{\Omega(n^{1/4})}.$
\end{theorem}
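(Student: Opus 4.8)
The plan is to obtain this as a short composition of Theorem~\ref{thm:cut_suspension} with the lower bound for the stable set polytope of cubic planar graphs from Corollary~\ref{cor:stab_cubic_planar}. First I would invoke Corollary~\ref{cor:stab_cubic_planar} to get, for the given $n$, a cubic planar graph $H$ with $O(n)$ vertices and edges such that $\xc(\STAB(H))\geqslant 2^{\Omega(n^{1/4})}$. Let $G$ be the suspension of $H$, that is, the graph obtained by adjoining a new vertex $0$ joined to every vertex of $H$. Then $G$ has $O(n)$ vertices and $O(n)$ edges and, by construction, $G$ is a suspension of a planar graph, which is exactly the hypothesis needed below; note in particular that no planarity of $G$ itself is required.

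Next I would apply Theorem~\ref{thm:cut_suspension} to this pair $H,G$: it gives that $\STAB(H)$ is the projection of a face of $\CutP(G)$. Chaining Proposition~\ref{prop:face} (for the face) with Proposition~\ref{prop:ef} (for the projection) then yields
\[
\xc(\CutP(G))\;\geqslant\;\xc(\STAB(H))\;\geqslant\;2^{\Omega(n^{1/4})},
\]
which is the claimed bound. One may further remark that $G$ is $K_6$-minor-free: in any $K_6$-minor of a suspension, the five branch sets not containing the apex vertex $0$ would yield a $K_5$-minor of the planar graph $H$, which is impossible; hence this also exhibits the promised family inside the class for which Barahona proved max-cut to be NP-hard, sharpening the statement announced just before the theorem.

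I do not expect a genuine obstacle here, since all of the difficulty has already been absorbed into Corollary~\ref{cor:stab_cubic_planar} (built on the planarisation and cubification gadgets of Theorems~\ref{thm:making_a_graph_planar} and~\ref{thm:make_cubic}) and into Theorem~\ref{thm:cut_suspension}. The only points needing a line of care are the bookkeeping --- checking that adding a single apex vertex keeps the vertex and edge counts linear in $n$, so that the exponent remains $n^{1/4}$ rather than degrading --- and the observation that the $O(n)$-vertex graph produced is simultaneously a suspension of a planar graph and $K_6$-minor-free.
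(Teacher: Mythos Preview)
Your proposal is correct and follows essentially the same route as the paper: take a planar graph with large $\xc(\STAB(\cdot))$, form its suspension, and apply Theorem~\ref{thm:cut_suspension} together with Propositions~\ref{prop:ef} and~\ref{prop:face}. The only cosmetic difference is that the paper invokes Corollary~\ref{cor:stab_planar} (a planar graph on $O(n^2)$ vertices with $\xc\geqslant 2^{\Omega(\sqrt n)}$, then reparametrises) whereas you invoke the slightly stronger Corollary~\ref{cor:stab_cubic_planar}, which already delivers a graph on $O(n)$ vertices with the $2^{\Omega(n^{1/4})}$ bound; either choice works, and your additional $K_6$-minor-free remark is a welcome bonus that the paper states only in the surrounding discussion.
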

\begin{proof}
Consider a planar graph $G=(V,E)$ with $n$ vertices for which $\xc(\STAB(G))\geqslant 2^{\Omega(n^{1/4})}.$ 
Corollary \ref{cor:stab_planar} guarantees the existence of such a graph for every $n$. 
Then the suspension over $G$ has $n+1$ vertices and a linear number of edges.
The theorem then follows by applying Theorem \ref{thm:cut_suspension} together with Propositions 1 and 2.
\end{proof}

The above theorem provides a sharp contrast for the complexity of the cut polytope for graphs in terms of their minors. As noted in the introduction, for any $K_5$ minor-free graph $G$ with $n$ vertices $\CutP(G)$ has an extension of size $O(n^3)$ whereas the above result shows that there are $K_6$ minor free graphs whose cut polytope has superpolynomial extension complexity.
\section{Concluding remarks}
  \label{sect:conclusion}
We have a given a simple polyhedral procedure for proving lower bounds on the extension complexity of a polytope.
Using this procedure and some standard NP-completeness reductions we were able to prove lower bounds on the extension
complexity of various well known combinatorial polytopes. For the cut polytope in particular, we are able to draw a sharp
line, in terms of minors, for when this complexity becomes super polynomial.

Nevertheless the procedure is not completely `automatic' in the sense that any NP-completeness reduction of a certain type,
say using gadgets, automatically gives a result on the extension complexity of related polytopes.
This would seem to be a very promising line of future research.

\section*{Acknowledgments}

Research of the first author is supported by the NSERC and JSPS. Research of the second author is supported by FNRS.

\bibliographystyle{plain}
\bibliography{extform}

%
%
%
%
%
%
%
%
%

\end{document}